\theoremstyle{plain}
\newtheorem{theorem}{Theorem}
\newtheorem{theorema}{Theorem A\!\!}
\newtheorem{proposition}{Proposition}
\newtheorem{corollaryz}{Corollary\!\!}
\newtheorem{lemma}{Lemma}
\numberwithin{equation}{section}  
\theoremstyle{definition}
\theoremstyle{remark}
\newtheorem{remarkz}{Remark\!\!} 
\title[singular integrals along surfaces of revolution ]{Estimates for  singular integrals along surfaces of revolution  }  
 \author{Shuichi Sato} 
\begin{document}
\setcounter{page}{1}
\address{Department of Mathematics, 
Faculty of Education, 
Kanazawa University,    
Kanazawa 920-1192, 
Japan} 
\email{shuichi@kenroku.kanazawa-u.ac.jp} 
\thanks{2000 {\it Mathematics Subject Classification.\/}  
Primary 42B20, 42B25.
\endgraf 
{\it Key Words and Phrases.} Non-isotropic singular integrals, extrapolation,  
trigonometric integrals.}

\maketitle 

\begin{abstract} 
We prove certain $L^p$ estimates ($1<p<\infty$) for non-isotropic 
singular integrals 
along surfaces of revolution. The singular integrals are defined 
by rough kernels.  As an application we obtain 
$L^p$ boundedness of the singular integrals under a sharp size condition on 
their kernels.  
 We also prove a certain estimate for a trigonometric integral, which 
 is useful in studying non-isotropic singular integrals. 
\end{abstract}

\section{ Introduction}  
Let $P$ be an $n\times n$ real matrix whose eigenvalues have positive real 
parts.  Let $\gamma=\text{{\rm trace} $P$}$. Define a dilation group 
$\{A_t\}_{t>0}$ on $\Bbb R^n$ by $A_t=t^P=\exp((\log t) P)$.   
We assume $n\geq 2$.  
There is a non-negative 
function $r$ on $\Bbb R^n$ associated with $\{A_t\}_{t>0}$.  The function $r$ 
 is continuous on $\Bbb R^n$ and infinitely differentiable in 
 $\Bbb R^n\setminus \{0\}$;  furthermore it satisfies 
\begin{enumerate}  
\renewcommand{\labelenumi}{(\arabic{enumi})}  
\item $r(A_tx)=tr(x)$ for all $t>0$ and $x\in \Bbb R^n$;  
\item $r(x+y)\leq C(r(x)+r(y))$ for some $C>0$; 
\item  if $\Sigma=\{x\in \Bbb R^n: r(x)=1\}$, then 
$\Sigma=\{\theta\in \Bbb R^n: \langle B\theta, \theta\rangle=1\}$ 
for a positive symmetric matrix $B$, where $\langle\cdot,\cdot\rangle$ 
denotes the inner product in $\Bbb R^n$.   
 \end{enumerate}    
 Also, we have $dx=t^{\gamma-1}\ d\sigma\,dt$, that is,  
$$\int_{\Bbb R^n}f(x)\,dx=\int_0^\infty\int_\Sigma f(A_t\theta)t^{\gamma-1}\,
d\sigma(\theta)\,dt $$ 
for appropriate functions $f$, where $d\sigma$ is a $C^\infty$ measure on 
$\Sigma$.  
See \cite{CT, R, SW2} for more details.  
\par 
Let $\Omega$ be locally integrable in $\Bbb R^n\setminus\{0\}$ and 
homogeneous of degree $0$ with respect to the dilation group 
$\{A_t\}$, that is,  $\Omega(A_tx)=\Omega(x)$ for $x\neq 0$. 
We assume  that 
$$ \int_\Sigma \Omega(\theta)\, d\sigma(\theta)=0.$$ 
 For $s\geq 1$, let $\Delta_s$ denote the collection of measurable functions 
$h$ on $\Bbb R_+=\{t\in \Bbb R : t>0\}$ satisfying 
$$\|h\|_{\Delta_s}=
\sup_{j\in \Bbb Z}\left(\int_{2^j}^{2^{j+1}}|h(t)|^s\,dt/t\right)^{1/s}
<\infty,$$  
where $\Bbb Z$ denotes the set of integers. 
We define $\|h\|_{\Delta_\infty}$ as usual ($\|h\|_{\Delta_\infty}=
\|h\|_{L^\infty(\Bbb R_+)}$).   
\par 
 Let $\Gamma:[0, \infty)\to \Bbb R^m$ be 
 a continuous mapping satisfying $\Gamma(0)=0$.   
We define 
a singular integral operator along the surface $(y,\Gamma(r(y)))$ by 
\begin{align} 
 Tf(x,z)
 &=\mathop{\mathrm{p.v.}}\int_{\Bbb R^n}f(x-y, z-\Gamma(r(y)))K(y)\,dy   
 \\ 
 &=\lim_{\epsilon\to 0}\int_{r(y)>\epsilon}f(x-y, z-\Gamma(r(y)))K(y)\,dy, 
 \notag  
   \end{align}  
where $K(y)=h(r(y))\Omega(y')r(y)^{-\gamma}$,  $y'=A_{r(y)^{-1}}y$ and 
 $h\in \Delta_1$.   We assume that  
the principal value integral in (1.1) exists for every 
$(x,z)\in \Bbb R^n\times \Bbb R^m$ and $f\in \mathscr S(\Bbb R^n\times \Bbb R^m)$ (the Schwartz class).  
\par 
 We denote by $L\log L(\Sigma)$  the Zygmund class of all those 
 functions  $\Omega$ on $\Sigma$ 
  which satisfy 
  $$\int_{\Sigma}|\Omega(\theta)|\log(2+|\Omega(\theta)|)\, d\sigma(\theta)
  <\infty.$$  
  Also, we consider the $L^q(\Sigma)$ spaces and write $\|\Omega\|_q=
  \left(\int_{\Sigma}|\Omega(\theta)|^q\,d\sigma(\theta)\right)^{1/q}$ for 
  $\Omega\in  L^q(\Sigma)$ ($\|\Omega\|_\infty$ is defined as usual).    
  \par 
 Let 
 $$ M_\Gamma g(z)=\sup_{R>0}R^{-1}\int_0^R|g(z-\Gamma(t))|\,dt. $$
 We assume that the maximal operator $M_\Gamma$ is bounded on $L^p(\Bbb R^m)$ 
 for all $p>1$.  See \cite{St, SW2} for examples of such functions 
 $\Gamma$. 
 \par 
  In this note we  prove the following.   
 \begin{theorem} Let $T$ be as in $(1.1)$. Suppose that  
$\Omega\in L^q(\Sigma)$ for some $q\in (1, 2]$ and 
$h\in \Delta_s$ for some $s>1$.  Then,  we have 
$$\|Tf\|_{L^p(\Bbb R^{n+m})} \leq C_p(q - 1)^{-1}
\|\Omega\|_q\|h\|_{\Delta_s}\|f\|_{L^p(\Bbb R^{n+m})} $$  
if   $|1/p- 1/2|<\min(1/s',1/2)$, 
where $1/s'+1/s=1$ and  the constant $C_p$ is independent of $q$ and    
$\Omega$. 
\end{theorem}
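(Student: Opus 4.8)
The plan is to run the Littlewood--Paley / Fourier-transform argument of Duoandikoetxea and Rubio de Francia in the present non-isotropic, surface-of-revolution setting, the one genuinely new analytic ingredient being a van der Corput type estimate for a trigonometric integral attached to the dilations $\{A_t\}$ (the estimate advertised in the abstract). First I would decompose the kernel dyadically in $r(y)$: writing $y=A_t\theta$ with $t=r(y)$, $\theta=y'$, set
$$\sigma_j*f(x,z)=\int_{2^j\le r(y)<2^{j+1}}f(x-y,\,z-\Gamma(r(y)))\,h(r(y))\Omega(y')r(y)^{-\gamma}\,dy\qquad(j\in\mathbb Z),$$
so that $Tf=\sum_{j\in\mathbb Z}\sigma_j*f$, and, using $dx=t^{\gamma-1}\,d\sigma(\theta)\,dt$,
$$\widehat{\sigma_j}(\xi,\eta)=\int_{2^j}^{2^{j+1}}e^{-i\langle\eta,\Gamma(t)\rangle}h(t)\Big(\int_\Sigma e^{-i\langle\xi,A_t\theta\rangle}\Omega(\theta)\,d\sigma(\theta)\Big)\frac{dt}{t}.$$
Let $\rho$ be the homogeneous norm of the adjoint group $\{A_t^*\}$. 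I will need three estimates for $\widehat{\sigma_j}$: (a) the trivial one $|\widehat{\sigma_j}(\xi,\eta)|\le C\|h\|_{\Delta_s}\|\Omega\|_q$ (H\"older in $t$, together with $\Delta_s\subset\Delta_1$ and $L^q(\Sigma)\subset L^1(\Sigma)$); (b) a cancellation estimate $|\widehat{\sigma_j}(\xi,\eta)|\le C\|h\|_{\Delta_s}\|\Omega\|_q(2^j\rho(\xi))^{a}$ for some $a\in(0,1]$ when $2^j\rho(\xi)\le1$, obtained from $\int_\Sigma\Omega\,d\sigma=0$, $|e^{iu}-1|\le|u|$, and $|\langle\xi,A_t\theta\rangle|\le C(t\rho(\xi))^{a}$ on that range; and (c) a decay estimate $|\widehat{\sigma_j}(\xi,\eta)|\le C\|h\|_{\Delta_s}\|\Omega\|_q(2^j\rho(\xi))^{-\beta(q)}$ when $2^j\rho(\xi)\ge1$, with $\beta(q)>0$ and $\beta(q)\ge c(q-1)$.

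Estimate (c) is the heart of the matter and is where I expect the main obstacle. H\"older's inequality in $t$ reduces it to bounding $\big(\int_{2^j}^{2^{j+1}}|J(\xi,t)|^{s'}\,dt/t\big)^{1/s'}$ with $J(\xi,t)=\int_\Sigma e^{-i\langle\xi,A_t\theta\rangle}\Omega(\theta)\,d\sigma(\theta)$, and since $|J|\le\|\Omega\|_1$ it suffices to control $\int_{2^j}^{2^{j+1}}|J(\xi,t)|^2\,dt/t$. Expanding the square,
$$\int_{2^j}^{2^{j+1}}|J(\xi,t)|^2\,\frac{dt}{t}=\int_\Sigma\int_\Sigma\Omega(\theta)\overline{\Omega(\phi)}\,I_j(\xi,\theta-\phi)\,d\sigma(\theta)\,d\sigma(\phi),\qquad I_j(\xi,w)=\int_{2^j}^{2^{j+1}}e^{-i\langle\xi,A_t w\rangle}\,\frac{dt}{t}.$$
The inner integral $I_j(\xi,w)$ is the trigonometric integral in question; after the substitution $t=2^je^{u}$ its phase becomes $\langle A_{2^j}^*\xi,e^{uP}w\rangle$, $u\in[0,\log2]$, and the crucial point is to prove for it a decay estimate of van der Corput type sensitive to this non-isotropic phase, decaying as a power of $|\langle A_{2^j}^*\xi,w\rangle|$ (equivalently, once integrated against the smooth measure on $\Sigma$, as a power of $2^j\rho(\xi)$). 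I expect this to require the bulk of the ``trigonometric integral'' part of the work. Feeding such an estimate back, and bounding the near-diagonal contribution from $\{|\theta-\phi|\lesssim(2^j\rho(\xi))^{-1}\}$ by H\"older on $\Sigma$ with exponent $q$ (so that this set contributes $\|\Omega\|_q^2$ times its measure to the power $1/q'$), one gets $\int_{2^j}^{2^{j+1}}|J|^2\,dt/t\le C\|\Omega\|_q^2(2^j\rho(\xi))^{-1/q'}$ up to a harmless power of $\log(2+2^j\rho(\xi))$, which yields (c) with $\beta(q)$ comparable to $1/q'=(q-1)/q$ (the logarithm absorbed by slightly decreasing the exponent, and the passage from $|J|^{s'}$ to $|J|^2$ costing only a fixed factor in the exponent). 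I stress that $(q-1)^{-1}$ will \emph{not} appear in any single one of (a)--(c); it emerges only upon summation.

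Next I would set up a Littlewood--Paley decomposition in the $\mathbb R^n$-variable: choose $\{\varphi_k\}_{k\in\mathbb Z}$ on $\mathbb R^n$ with $\widehat{\varphi_k}$ supported where $\rho(\xi)\approx2^{-k}$ and $\sum_k\widehat{\varphi_k}\equiv1$ off the origin, let $S_k$ be the multiplier operator $\widehat{S_kg}(\xi,\eta)=\widehat{\varphi_k}(\xi)\widehat g(\xi,\eta)$ on $\mathbb R^{n+m}$, and write $Tf=\sum_{\mu\in\mathbb Z}T_\mu f$ with $T_\mu f=\sum_j\sigma_j*(S_{j+\mu}f)$. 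Since $2^j\rho(\xi)\approx2^{-\mu}$ on the support of $\widehat{\varphi_{j+\mu}}$, Plancherel together with the bounded overlap of the $\widehat{\varphi_{j+\mu}}$ and estimates (a)--(c) give
$$\|T_\mu f\|_2\le C\|h\|_{\Delta_s}\|\Omega\|_q\,\varepsilon_\mu\,\|f\|_2,\qquad \varepsilon_\mu=\begin{cases}2^{-a\mu}, & \mu\ge0,\\ 2^{-\beta(q)|\mu|}, & \mu<0.\end{cases}$$
I would complement this with a \emph{rough} bound $\|T_\mu f\|_p\le C\|h\|_{\Delta_s}\|\Omega\|_q\,\|f\|_p$ \emph{uniform in $\mu$}, valid precisely when $|1/p-1/2|<\min(1/s',1/2)$; this is where the hypothesis $h\in\Delta_s$ and the $L^p$-boundedness of $M_\Gamma$ are used. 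By the Littlewood--Paley inequality and bounded frequency overlap, $\|T_\mu f\|_p\le C\|(\sum_j|\sigma_j*S_{j+\mu}f|^2)^{1/2}\|_p$; bounding (via H\"older in $t$) $|\sigma_j*g(x,z)|$ by $\|h\|_{\Delta_s}$ times the average against $|\Omega|\,d\sigma$ over $\Sigma$ of $\big(\int_{2^j}^{2^{j+1}}|g(x-A_t\theta,z-\Gamma(t))|^{s'}\,dt/t\big)^{1/s'}$, and then applying the Cauchy--Schwarz inequality on $\Sigma$ and Minkowski's inequality, one reduces matters to the inequality
$$\Big\|\Big(\sum_j\Big(\int_{2^j}^{2^{j+1}}|g_j(\cdot-A_t\theta,\cdot-\Gamma(t))|^{s'}\,\frac{dt}{t}\Big)^{2/s'}\Big)^{1/2}\Big\|_p\le C\Big\|\Big(\sum_j|g_j|^2\Big)^{1/2}\Big\|_p,$$
uniformly in $\theta\in\Sigma$ (the case $p<2$ being handled by duality); the left-hand operator is controlled by a composition of the non-isotropic Hardy--Littlewood maximal operator in $x$ with $M_\Gamma$ in $z$, and it is precisely the $s'$-averaging here that forces $p$ into the stated range.

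Finally I would interpolate the $L^2$ decay with this rough $L^p$ bound: for $p$ in the range this gives $\|T_\mu f\|_p\le C\|h\|_{\Delta_s}\|\Omega\|_q\,2^{-c_p(q)|\mu|}\|f\|_p$ with $c_p(q)\ge c(q-1)$, and then
$$\|Tf\|_p\le\sum_{\mu\in\mathbb Z}\|T_\mu f\|_p\le C\|h\|_{\Delta_s}\|\Omega\|_q\Big(\sum_{\mu\in\mathbb Z}2^{-c_p(q)|\mu|}\Big)\|f\|_p\le C_p(q-1)^{-1}\|h\|_{\Delta_s}\|\Omega\|_q\|f\|_p,$$
the last step because $\sum_{\mu\in\mathbb Z}2^{-c_p(q)|\mu|}\le 1+2/(2^{c_p(q)}-1)\le C_p(q-1)^{-1}$; this geometric-type series, whose ratio tends to $1$ as $q\to1$, is the sole origin of the factor $(q-1)^{-1}$. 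The a priori estimate on $\mathscr S(\mathbb R^{n+m})$ obtained this way then extends to all of $L^p$ by density, and $C_p$ evidently depends only on $p$ (and on the fixed data $s,n,m,P,\Gamma$), not on $q$ or $\Omega$, as claimed.
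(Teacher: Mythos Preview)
Your overall architecture—dyadic decomposition, Fourier decay via the trigonometric integral, Littlewood--Paley splitting $T=\sum_\mu T_\mu$, $L^2$ decay interpolated against a ``rough'' $L^p$ bound—matches the paper's. Your identification of the van der Corput estimate for $\int e^{i\langle B_t\eta,\zeta\rangle}\,dt/t$ as the key analytic input is also correct; the paper proves exactly this (its Lemma~2, a corollary of Theorem~5).

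There is, however, a genuine gap in your rough $L^p$ bound. You claim that
\[
\Big\|\Big(\sum_j\Big(\int_{2^j}^{2^{j+1}}|g_j(\cdot-A_t\theta,\cdot-\Gamma(t))|^{s'}\,\tfrac{dt}{t}\Big)^{2/s'}\Big)^{1/2}\Big\|_p
\le C\Big\|\Big(\sum_j|g_j|^2\Big)^{1/2}\Big\|_p
\]
uniformly in $\theta$, because ``the left-hand operator is controlled by a composition of the non-isotropic Hardy--Littlewood maximal operator in $x$ with $M_\Gamma$ in $z$''. In the non-isotropic setting this is false as stated: the shift $A_t\theta$ is \emph{not} linear in $t$ (by the paper's formula~(4.2) its components are combinations of $t^{\gamma_i}(\log t)^j$), so the curve $t\mapsto(A_t\theta,\Gamma(t))$ does not split into a line in $x$ times the curve $\Gamma$ in $z$, and there is no reduction to $M\circ M_\Gamma$. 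By the usual duality argument, the displayed inequality is equivalent to the $L^{(p/2)'}$-boundedness, uniform in $\theta$, of the maximal operator along the curve $(A_t\theta,\Gamma(t))$ in $\Bbb R^{n+m}$; this is not available from the hypotheses, and in fact the paper's own maximal bound (its Proposition~3) is obtained Fourier-analytically by a bootstrap and carries a $q$-dependent factor $B_{q2}^{2/p}$. Consequently the paper's rough bound (its (2.23)) is \emph{not} independent of $q$, and with your fixed scale $\beta=2$ the interpolation-plus-summation would yield an extra power of $(q-1)^{-1}$, spoiling the conclusion.

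The paper's remedy is the one idea missing from your plan: it runs the entire argument at a \emph{$q$-dependent dyadic scale} $\beta=2^{q'}$. The Fourier decay becomes $(\beta^k s(\xi))^{-\epsilon_0/(q's')}$, the geometric sums $B_{qs},B_{q2}$ are then bounded independently of $q$, and the only $q$-dependence left is the single factor $\log\beta\sim(q-1)^{-1}$ coming from (2.1). This rescaling is precisely what converts the $q$-dependent maximal/vector-valued bounds (obtained via the bootstrap of Proposition~3 and the Fan--Pan device (2.20)--(2.22)) into the clean estimate of Theorem~1. If you insert this $\beta=2^{q'}$ idea and replace your direct maximal-function claim by the paper's bootstrap for $\mu^*$ (or $\lambda^*$), your outline becomes a valid proof.
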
  

\begin{theorem}  Suppose $\Omega \in L\log L(\Sigma)$ and $h\in \Delta_s$ 
for some $s>1$.  Then,  $T$ is bounded on $L^p(\Bbb R^{n+m})$ 
if $|1/p- 1/2|<\min(1/s',1/2)$.  
\end{theorem}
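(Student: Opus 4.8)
The plan is to derive Theorem 2 from Theorem 1 by a Yano-type extrapolation. The crucial features of Theorem 1 are that its constant depends on $q$ only through the factor $(q-1)^{-1}$, that $C_p$ is independent of $q$ and $\Omega$, and that the admissible range $|1/p-1/2|<\min(1/s',1/2)$ depends only on the fixed exponent $s$; thus one can let $q\to 1$ at the cost of a controlled $(q-1)^{-1}$ blow-up, which is exactly what an $L\log L$ hypothesis can absorb.

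First I would decompose $\Omega$ dyadically by size. Put $E_0=\{\theta\in\Sigma:|\Omega(\theta)|\le 2\}$ and, for each integer $k\ge 1$, $E_k=\{\theta\in\Sigma:2^k<|\Omega(\theta)|\le 2^{k+1}\}$, and set $a_k=\sigma(E_k)$. Define the functions, homogeneous of degree $0$ with respect to $\{A_t\}$,
$$\Omega_k(\theta)=\Omega(\theta)\chi_{E_k}(\theta)-\sigma(\Sigma)^{-1}\int_\Sigma\Omega(\phi)\chi_{E_k}(\phi)\,d\sigma(\phi),$$
so that each $\Omega_k$ is bounded, $\int_\Sigma\Omega_k\,d\sigma=0$, and $\sum_{k\ge 0}\Omega_k=\Omega$ (here the cancellation $\int_\Sigma\Omega\,d\sigma=0$ is used). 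Letting $T_k$ be the operator in $(1.1)$ with $\Omega$ replaced by $\Omega_k$, I would write $Tf=\sum_{k\ge 0}T_kf$, justifying the interchange and the $L^p$-convergence of the series a posteriori from the bounds below.

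Next I would estimate $\|\Omega_k\|_q$ for $q$ near $1$: from $\|\Omega\chi_{E_k}\|_\infty\le 2^{k+1}$, $\|\Omega\chi_{E_k}\|_1\le 2^{k+1}a_k$, the log-convexity of the $L^q$ norms, and the trivial bound on the subtracted constant, one gets $\|\Omega_k\|_q\le C\,2^k a_k^{1/q}$ with $C$ independent of $k$ and of $q\in(1,2]$. For $k\ge 1$ I would apply Theorem 1 with $q=q_k:=1+1/k$ (so $(q_k-1)^{-1}=k$) and the given $h\in\Delta_s$, obtaining
$$\|T_kf\|_{L^p(\Bbb R^{n+m})}\le C_p\,k\,2^k a_k^{1/q_k}\,\|h\|_{\Delta_s}\,\|f\|_{L^p(\Bbb R^{n+m})}$$
on the range $|1/p-1/2|<\min(1/s',1/2)$; for $k=0$ I would simply use Theorem 1 with $q=2$, since $\Omega_0\in L^\infty(\Sigma)\subset L^2(\Sigma)$.

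The main (mildly technical) step is the summation over $k$. Writing $a_k^{1/q_k}=a_k\,a_k^{-1/(k+1)}$ and splitting the indices $k\ge 1$ into those with $a_k\ge 4^{-k}$ and those with $a_k<4^{-k}$: in the first case $a_k^{-1/(k+1)}\le 4$, so the $k$-th term is at most $4C_p\,k\,2^k a_k\,\|h\|_{\Delta_s}\|f\|_{L^p}$, and $\sum_{k\ge1}k\,2^k a_k\le C\int_\Sigma|\Omega(\theta)|\log(2+|\Omega(\theta)|)\,d\sigma(\theta)<\infty$ by hypothesis; in the second case $a_k^{1/q_k}=a_k^{k/(k+1)}<4^{-k^2/(k+1)}\le 4\cdot 4^{-k}$, so the $k$-th term is at most $4C_p\,k\,2^{-k}\,\|h\|_{\Delta_s}\|f\|_{L^p}$, which is summable. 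Adding the $k=0$ term gives $\|Tf\|_{L^p}\le C\|f\|_{L^p}$ with $C$ depending only on $p$, $s$, $\|h\|_{\Delta_s}$ and $\int_\Sigma|\Omega|\log(2+|\Omega|)\,d\sigma$. Granted Theorem 1 the argument is routine; the only things to watch are preserving $\int_\Sigma\Omega_k\,d\sigma=0$ in the decomposition, the choice $q_k=1+1/k$ matched to the $(q-1)^{-1}$ growth, and the threshold $4^{-k}$ in the last step — precisely the place where the Zygmund $L\log L$ condition, rather than mere integrability of $\Omega$, is needed.
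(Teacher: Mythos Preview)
Your proof is correct and follows essentially the same extrapolation approach as the paper: the dyadic decomposition of $\Omega$ into mean-zero pieces $\Omega_k$, the application of Theorem~1 with $q_k=1+1/k$, the estimate $\|\Omega_k\|_{q_k}\le C2^k a_k^{k/(k+1)}$, and the summation via the split on whether $a_k$ exceeds a geometric threshold are all exactly what the paper does (it uses $3^{-j}$ where you use $4^{-k}$, which is immaterial). The paper gives the argument explicitly only for Theorem~4, noting Theorem~2 is proved the same way; your write-up is precisely that adaptation.
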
    
Theorem 2 follows from Theorem 1 by an extrapolation method.  
When $r(x)=|x|$ (the Euclid norm), $m=1$ and $\Gamma$ is a $C^2$, convex, 
increasing function, Theorem 2 was proved in A.~Al-Salman and Y.~Pan 
\cite{A-SP} (see \cite[Theorem 4.1]{A-SP} and also \cite{KWWZ} for a related 
result).  
In \cite{A-SP}, it is noted that  the 
estimates as $q\to 1$ of Theorem 1 (in their setting) can be used through 
extrapolation to prove the $L^p$ boundedness of \cite[Theorem 4.1]{A-SP}, 
although such estimates are yet to be proved.    In this note, we are able to 
prove Theorem 1 and apply it to prove Theorem 2.  
\par 
If $\Gamma\equiv 0$ ($\Gamma$ is identically $0$), 
then $T$ essentially  reduces to the lower dimensional 
 singular integral 
 \begin{equation} 
  Sf(x)=\mathrm{p. v.} \int_{\Bbb R^n}f(x-y)K(y)\,dy.   
  \end{equation}  
For this singular integral we have the following.  
 \begin{theorem} Let 
$\Omega\in L^q(\Sigma)$ and $h\in \Delta_s$ for some $q, s \in (1, 2]$. 
Then we have 
$$\|Sf\|_{L^p(\Bbb R^{n})} \leq C_p(q - 1)^{-1}(s - 1)^{-1}
\|\Omega\|_q\|h\|_{\Delta_s}\|f\|_{L^p(\Bbb R^{n})} $$  
for all $p \in (1,\infty)$, where the constant $C_p$ is independent of 
$q, s, \Omega$ and $h$. 
\end{theorem}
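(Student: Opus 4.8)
The plan is to prove Theorem~3 by the method of Fourier transform estimates together with a Littlewood--Paley decomposition adapted to the dilations $\{A_t\}$, keeping explicit track of the dependence on $q$ and $s$; once the right estimates for the dyadic pieces of the kernel are available, the passage to all $p\in(1,\infty)$ is carried out by the now-standard machinery for rough homogeneous singular integrals, and the degeneration as $q,s\to1$ is isolated in a single geometric series.

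First I would decompose the kernel dyadically in the radial variable. Put $\sigma_j(y)=K(y)\chi_{\{2^j\le r(y)<2^{j+1}\}}(y)$, so that $Sf=\sum_{j\in\Bbb Z}\sigma_j*f$ and, after the substitution $y=A_t\theta$,
\[
\widehat{\sigma_j}(\xi)=\int_{2^j}^{2^{j+1}}\int_\Sigma e^{-i\langle\theta,A_t^*\xi\rangle}\,\Omega(\theta)\,h(t)\,d\sigma(\theta)\,\frac{dt}{t},\qquad A_t^*=t^{P^*}.
\]
Let $\rho$ be the homogeneous norm associated with the adjoint dilation group $\{A_t^*\}$, so that $\rho(A_t^*\xi)=t\rho(\xi)$. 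The core of the argument is the following triple of estimates, with $a,c,C$ absolute positive constants: (i) the trivial bound $|\widehat{\sigma_j}(\xi)|\le\|\Omega\|_1\|h\|_{\Delta_1}$; (ii) a low-frequency bound $|\widehat{\sigma_j}(\xi)|\le C\|\Omega\|_1\|h\|_{\Delta_1}\,\rho(A_{2^j}^*\xi)^{a}$, obtained from the cancellation $\int_\Sigma\Omega\,d\sigma=0$ by replacing $e^{-i\langle\theta,A_t^*\xi\rangle}$ with $e^{-i\langle\theta,A_t^*\xi\rangle}-1$ and using $|A_t^*\xi|\le C\rho(A_t^*\xi)^{a}$ when $\rho(A_t^*\xi)$ is small; and (iii) a high-frequency decay bound
\[
|\widehat{\sigma_j}(\xi)|\le C\|\Omega\|_q\|h\|_{\Delta_s}\,\rho(A_{2^j}^*\xi)^{-c/(q's')}\qquad\text{whenever }\rho(A_{2^j}^*\xi)\ge1 .
\]
Estimate (iii) is where the hypotheses $\Omega\in L^q$, $h\in\Delta_s$ and the paper's trigonometric-integral estimate come in. Applying Hölder's inequality in $t$ with exponent $s$ reduces (iii) to controlling the $L^{s'}(dt/t)$-average over $[2^j,2^{j+1}]$ of $I(t):=\int_\Sigma e^{-i\langle\theta,A_t^*\xi\rangle}\Omega(\theta)\,d\sigma(\theta)$; writing $|I(t)|^{s'}=|I(t)|^{s'-2}|I(t)|^2$ (this is the step in which the hypothesis $s\le2$, hence $s'\ge2$, is used), bounding the first factor by $(C\|\Omega\|_q)^{s'-2}$, and invoking the trigonometric-integral estimate
\[
\int_{2^j}^{2^{j+1}}|I(t)|^2\,\frac{dt}{t}\le C\|\Omega\|_q^2\,\rho(A_{2^j}^*\xi)^{-c/q'}
\]
for the remaining factor, one obtains (iii).

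Next I would sum these estimates. For the $L^2$ bound: the $j$ with $\rho(A_{2^j}^*\xi)\le1$ contribute, through (ii), a convergent geometric series bounded by an absolute constant times $\|\Omega\|_q\|h\|_{\Delta_s}$, while the remaining $j$ contribute, through (iii), at most $C\|\Omega\|_q\|h\|_{\Delta_s}\sum_{m\ge0}2^{-cm/(q's')}\le C\,q's'\,\|\Omega\|_q\|h\|_{\Delta_s}$; since $q's'\le C(q-1)^{-1}(s-1)^{-1}$ for $q,s\in(1,2]$, Plancherel's theorem gives $\|Sf\|_2\le C(q-1)^{-1}(s-1)^{-1}\|\Omega\|_q\|h\|_{\Delta_s}\|f\|_2$. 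For $p\ne2$, choose a Littlewood--Paley resolution $\{\Delta_i\}_{i\in\Bbb Z}$ of the identity with $\widehat{\Delta_i}$ supported where $\rho(\xi)\sim2^{i}$, and write $Sf=\sum_i T_if$ with $T_if=\sum_j\sigma_j*\Delta_{i-j}f$. On the support of $\widehat{\Delta_{i-j}}$ one has $\rho(A_{2^j}^*\xi)\sim2^{i}$, so (ii), (iii) and almost orthogonality in $j$ give $\|T_if\|_2\le C\|\Omega\|_q\|h\|_{\Delta_s}\,2^{-\beta|i|}\|f\|_2$ with $\beta=\min(a,\,c/(q's'))$. On the other hand, by the standard treatment of rough singular integrals --- the reverse Littlewood--Paley inequality, pointwise domination of $|\sigma_j|*|\cdot|$ by a non-isotropic maximal operator, and the associated vector-valued maximal inequalities --- the operators $T_i$ are bounded on $L^p(\Bbb R^n)$ uniformly in $i$, with norm $\le C_p\|\Omega\|_q\|h\|_{\Delta_s}$, for $p>\max(q',s')$. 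Interpolating this uniform bound with the $L^2$ decay, and using duality (the adjoint $S^{*}$ being a singular integral of the same form) to descend from $p>2$ to $1<p<2$, one reaches every $p\in(1,\infty)$ with $\|T_if\|_p\le C_p\|\Omega\|_q\|h\|_{\Delta_s}\,2^{-\theta(p)\beta|i|}\|f\|_p$ for some $\theta(p)>0$; summing in $i$ produces the factor $\sum_i2^{-\theta(p)\beta|i|}\le C_p\beta^{-1}\le C_p(q-1)^{-1}(s-1)^{-1}$, which is exactly the estimate of Theorem~3.

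I expect the main obstacle to be estimate (iii): producing the oscillatory decay of $\widehat{\sigma_j}$ with an exponent of the precise shape $c/(q's')$ and with $\Omega$ entering only through $\|\Omega\|_q$. This rests on the non-isotropic trigonometric-integral estimate displayed above, whose proof genuinely uses the quadratic description $\Sigma=\{\theta:\langle B\theta,\theta\rangle=1\}$ from condition (3) --- a curvature/oscillatory-integral argument combined with Hölder's inequality over $\Sigma$ --- and which is the technical heart of the paper. A secondary, more routine point is to run the uniform-in-$i$ $L^p$ bounds for the pieces $T_i$ with constants that do not themselves blow up as $q,s\to1$, so that the whole loss is confined to the geometric series $\sum_i2^{-\theta(p)\beta|i|}$.
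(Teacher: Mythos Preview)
Your Fourier-transform estimates (i)--(iii) and the $L^2$ bound are correct and match the paper's Lemma~1 and (2.1)--(2.3). The gap is precisely in what you flag at the end as a ``secondary, more routine point'': the uniform-in-$i$ bound $\|T_if\|_p\le C_p\|\Omega\|_q\|h\|_{\Delta_s}\|f\|_p$ with $C_p$ independent of $q,s$. This is \emph{not} routine, and the pointwise domination you invoke does not deliver it. The vector-valued inequality $\bigl\|(\sum_j|\sigma_j*g_j|^2)^{1/2}\bigr\|_v\le C\bigl\|(\sum_j|g_j|^2)^{1/2}\bigr\|_v$ at an exponent $v$ with $|1/v-1/2|=1/(2r)$ comes, via the Duoandikoetxea--Rubio de Francia lemma, from the $L^r$ bound on the maximal operator $\mu^*(f)=\sup_j|\sigma_j|*|f|$. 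That maximal bound is not obtained by pointwise domination by a Hardy--Littlewood maximal function; the paper proves it (Proposition~3) through an inductive bootstrap in $p$, and with dyadic base~$2$ the resulting constant behaves like $(q's')^{2/r}$. Hence your ``uniform'' bound on $T_i$ in fact hides a factor $(q's')^{1/r}$, and after the interpolation and the geometric sum you end up with $(q's')^{1+|1/p-1/p'|}$ rather than $q's'$---that is, $(q-1)^{-2}(s-1)^{-2}$ in the worst case, not the claimed $(q-1)^{-1}(s-1)^{-1}$.

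The paper's device to fix this is the one idea missing from your outline: replace the dyadic base $2$ by $\beta=2^{q's'}$. Then the decay in (iii) reads $(\beta^k s(\xi))^{-\epsilon_0/(q's')}$, so along the Littlewood--Paley scale one gains $\beta^{-\epsilon_0/(q's')}=2^{-\epsilon_0}$ per step, and the bootstrap constant $B_{qs}=(1-\beta^{-\theta\epsilon_0/(q's')})^{-1}$ becomes an \emph{absolute} constant. All the $q,s$ dependence is then concentrated in the single prefactor $\log\beta\approx q's'$ coming from $\|\sigma_k\|\le C(\log\beta)\|\Omega\|_1\|h\|_{\Delta_1}$. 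Running Propositions~2 and~3 with this variable $\beta$ and specializing $\beta=2^{q's'}$ only at the very end yields the sharp bound $C_p(q-1)^{-1}(s-1)^{-1}\|\Omega\|_q\|h\|_{\Delta_s}$ for every $p\in(1,\infty)$. Without this rescaling, the argument you sketch gives the theorem only at $p=2$.
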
   
For $a > 0$, let
$$ 
L_a(h) = \sup_{j \in \Bbb Z} 
\int_{2^j}^{2^{j+1}}|h(r)|\left(\log(2+|h(r)|)\right)^a\, dr/r.  
$$  
We define a class $\mathscr L_a$  to be the space of all those 
measurable functions $h$ on $\Bbb R_+$ which satisfy $L_a(h)<\infty$.  
\par 
By Theorem 3 and an extrapolation we have the following.  
\begin{theorem}  Suppose $\Omega \in L\log L(\Sigma)$ and  
$h\in \mathscr L_a$ for some $a>2$.  Then $S$ is bounded on $L^p(\Bbb R^n)$ 
for all $p\in (1,\infty)$.  
\end{theorem}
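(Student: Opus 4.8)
The plan is to obtain Theorem 4 from Theorem 3 by a two–parameter extrapolation, decomposing $\Omega$ and $h$ according to the dyadic size of their values and, on each piece, choosing the exponents $q$ and $s$ of Theorem 3 optimally. For $k\ge1$ set $E_k=\{\theta\in\Sigma:2^k\le|\Omega(\theta)|<2^{k+1}\}$ and $E_0=\{\theta\in\Sigma:|\Omega(\theta)|<2\}$, and let $\Omega_k=\Omega\chi_{E_k}$; likewise for $j\ge1$ set $F_j=\{t\in\Bbb R_+:2^j\le|h(t)|<2^{j+1}\}$, $F_0=\{t\in\Bbb R_+:|h(t)|<2\}$, and $h_j=h\chi_{F_j}$. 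Because $\int_\Sigma\Omega\,d\sigma=0$ while the $\Omega_k$ need not have vanishing integral, replace $\Omega_k$ by $\widetilde\Omega_k=\Omega_k-c_k$ with $c_k=\sigma(\Sigma)^{-1}\int_\Sigma\Omega_k\,d\sigma$; then $\int_\Sigma\widetilde\Omega_k\,d\sigma=0$, and since $\sum_k c_k=\sigma(\Sigma)^{-1}\int_\Sigma\Omega\,d\sigma=0$ we have $\sum_k\widetilde\Omega_k=\Omega$, while Hölder's inequality gives $\|\widetilde\Omega_k\|_q\le2\|\Omega_k\|_q$ for every $q\in(1,2]$. The kernel of $S$ then factors as $h(r(y))\Omega(y')r(y)^{-\gamma}=\sum_{j,k}h_j(r(y))\widetilde\Omega_k(y')r(y)^{-\gamma}$, so $Sf=\sum_{j,k}S_{j,k}f$, where $S_{j,k}$ is the operator $(1.2)$ with $h$ and $\Omega$ replaced by $h_j$ and $\widetilde\Omega_k$; once the series of operator norms below is seen to converge, this identity is justified by a routine limiting argument. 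Theorem 3 applies to each $S_{j,k}$.

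For the $\Omega$–side I would apply Theorem 3 with $q=q_k=1+1/k$ for $k\ge1$ and $q_0=2$. Writing $\mu_k=\int_{E_k}|\Omega|\,d\sigma$, from $\|\Omega_k\|_{q_k}^{q_k}\le(2^{k+1})^{q_k}\sigma(E_k)$ and $\sigma(E_k)\le2^{-k}\mu_k$ one gets $(q_k-1)^{-1}\|\widetilde\Omega_k\|_{q_k}\le Ck\mu_k^{1/q_k}$, because $2^{k+1}\sigma(E_k)^{1/q_k}\le2\cdot2^{k(1-1/q_k)}\mu_k^{1/q_k}\le4\mu_k^{1/q_k}$. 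On $E_k$ one has $k\le(\log2)^{-1}\log(2+|\Omega|)$, whence $\sum_{k\ge1}k\mu_k\le C\|\Omega\|_{L\log L(\Sigma)}<\infty$. Splitting $\sum_{k\ge1}k\mu_k^{1/q_k}$ according to whether $\mu_k\ge2^{-k}$ (then $\mu_k^{1/q_k}\le2\mu_k$) or $\mu_k<2^{-k}$ (then $\mu_k^{1/q_k}\le\mu_k^{1/2}\le2^{-k/2}$) shows that $A:=\sum_{k\ge0}(q_k-1)^{-1}\|\widetilde\Omega_k\|_{q_k}<\infty$, with $A$ controlled by $\|\Omega\|_{L\log L(\Sigma)}$.

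For the $h$–side I would apply Theorem 3 with $s=s_j=1+1/j$ for $j\ge1$ and $s_0=2$. Since $|h_j|^{s_j}\le(2^{j+1})^{s_j-1}|h_j|$ on $F_j$, one obtains $\|h_j\|_{\Delta_{s_j}}\le(2^{j+1})^{(s_j-1)/s_j}\nu_j^{1/s_j}=2\,\nu_j^{1/s_j}$, where $\nu_j=\sup_{i\in\Bbb Z}\int_{2^i}^{2^{i+1}}|h(t)|\chi_{F_j}(t)\,dt/t$. The crucial observation is that $|h|\ge2^j$ on $F_j$ forces $(\log(2+|h|))^a\ge(j\log2)^a$ there, so $\nu_j\le(j\log2)^{-a}L_a(h)$ — here one really uses that $L_a$ is a supremum over the dyadic intervals $[2^i,2^{i+1})$. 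Hence $(s_j-1)^{-1}\|h_j\|_{\Delta_{s_j}}\le Cj\,\nu_j^{1/s_j}$, and splitting $\sum_{j\ge1}j\,\nu_j^{1/s_j}$ as before, its dominant part is $\sum_{j\ge1}j\,\nu_j\le CL_a(h)\sum_{j\ge1}j^{1-a}$, which converges exactly because $a>2$. Thus $B:=\sum_{j\ge0}(s_j-1)^{-1}\|h_j\|_{\Delta_{s_j}}<\infty$.

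Combining Theorem 3 with the triangle inequality then yields, for every $p\in(1,\infty)$,
\begin{align*}
\|Sf\|_{L^p(\Bbb R^n)}&\le\sum_{j\ge0}\sum_{k\ge0}\|S_{j,k}f\|_{L^p(\Bbb R^n)}\\
&\le C_p\Big(\sum_{k\ge0}(q_k-1)^{-1}\|\widetilde\Omega_k\|_{q_k}\Big)\Big(\sum_{j\ge0}(s_j-1)^{-1}\|h_j\|_{\Delta_{s_j}}\Big)\|f\|_{L^p(\Bbb R^n)}=C_pAB\,\|f\|_{L^p(\Bbb R^n)},
\end{align*}
which is the assertion. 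The mean–value correction for the $\Omega_k$ costs nothing, and the $\Omega$–series converges for $\Omega\in L\log L(\Sigma)$ as in the classical Yano argument; I expect the delicate point to be the $h$–series: the choice $s_j=1+1/j$ is essentially forced, since one needs $(2^{j+1})^{(s_j-1)/s_j}$ to remain bounded while $(s_j-1)^{-1}=j$, and with this choice membership in $\mathscr L_a$ for any $a>2$ is precisely what makes $\sum_j j\,\nu_j$ summable. Everything else is routine bookkeeping.
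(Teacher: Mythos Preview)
Your proposal is correct and follows essentially the same extrapolation scheme as the paper: a dyadic level-set decomposition of both $\Omega$ and $h$, application of Theorem~3 to each piece with $q_k=1+1/k$ and $s_j=1+1/j$, and summation using the $L\log L$ and $\mathscr L_a$ hypotheses. The only differences are cosmetic---the paper parameterizes the $\Omega$-pieces by $e_j=\sigma(F_j)$ rather than your $\mu_k=\int_{E_k}|\Omega|\,d\sigma$, and for the $h$-series the paper sums $\sum_m m^{1-am/(m+1)}$ directly instead of splitting on the size of $\nu_j$---but the arguments are interchangeable and the threshold $a>2$ arises for the same reason in both.
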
 
\par  
It is noted in \cite{DR} that $S$ is bounded on $L^p$ , $1<p<\infty$, if 
$\Omega\in L^q$ for some $q>1$ and $h\in \Delta_2$ 
(see \cite[Corollary 4.5]{DR}).  Theorem 4 improves that result. 
 See \cite{R, SW} for non-isotropic singular integrals $S$ with $h\equiv 1$ and also \cite{CZ, F, H, N} for related results. 
 \par 
 In Section 2, we prove Theorems 1 and 3. The proofs are based on the method 
 of \cite{DR}.   As in \cite{S}, a key idea of the 
 proof of Theorem 1 is to use a Littlewood--Paley decomposition depending on $q$ for which $\Omega \in L^q$. Theorem 3 is proved in a similar fashion.  
 Applying an extrapolation argument,  we can prove Theorems 2 and 4 
 from Theorems 1 and 3, respectively. We give a proof of Theorem 4 in 
   Section 3. 
 In Section 4, we prove an estimate for a trigonometric integral,
 a corollary of which is used in proving  Theorems 1 and 3.   
\par 
    Throughout this note,  
 the letter $C$ will be used to denote non-negative constants which may be 
different in different occurrences.

\section{Proofs of Theorems 1 and 3}    

Let $A^*$ denote the adjoint of a matrix $A$. 
Then $A_t^*=\exp((\log t)P^*)$. We write $A_t^*=B_t$. 
We can define a non-negative function $s$ from $\{B_t\}$ exactly 
in the same way as we define $r$ from $\{A_t\}$.   
\par
There are positive constants $c_1, c_2, c_3, c_4, \alpha_1, \alpha_2, 
\beta_1$ and $\beta_2$ such that 
\begin{gather*} 
c_1|x|^{\alpha_1}<r(x)<c_2|x|^{\alpha_2} \quad \text{if $r(x)\geq 1$}, 
\\ 
c_3|x|^{\beta_1}<r(x)<c_4|x|^{\beta_2} \quad \text{if $0<r(x)\leq  1$}.  
\end{gather*}    
Also, we have 
\begin{gather*}  
d_1|\xi|^{a_1}<s(\xi)<d_2|\xi|^{a_2} \quad \text{if $s(\xi)\geq 1$}, 
\\ 
d_3|\xi|^{b_1}<s(\xi)<d_4|\xi|^{b_2} \quad \text{if $0<s(\xi)\leq 1$} 
\end{gather*}    
for some positive numbers $d_1, d_2, d_3, d_4$, $a_1$, $a_2$, $b_1$ and 
 $b_2$ (see \cite{SW2}).  These estimates are useful in the following.  
\par  
We consider the singular integral operator  $T$ defined in (1.1).  
Let  $E_j=\{x\in \Bbb R^n: \beta^j<r(x)\leq \beta^{j+1}\}$, where 
$\beta\geq 2$ and $j\in \Bbb Z$. We define a sequence of Borel measures 
$\{\sigma_j\}$ on $\Bbb R^n\times\Bbb R^m$ by 
$$\hat{\sigma}_j(\xi,\eta)=\int_{E_j}e^{-2\pi i\langle y,\xi\rangle} 
e^{-2\pi i\langle\Gamma(r(y)),\eta\rangle} K(y)\, dy,$$ 
where  $\hat{\sigma}_j$ denotes the Fourier transform of $\sigma_j$ defined by 
 $$\hat{\sigma}_j(\xi,\eta) 
= \int e^{-2\pi i\langle (x,z),(\xi,\eta)\rangle}\,d\sigma_j(x,z).$$  
  Then $Tf(x) = \sum_{-\infty}^{\infty}\sigma_k * f(x)$. 
\par 
Let $\mu_k = |\sigma_k|$, where $|\sigma_k|$ denotes the total variation of 
$\sigma_k$. Let $\Omega\in L^q$, $h\in \Delta_s$, $q, s\in (1, 2]$. 
 We prove the following estimates (2.1)--(2.5): 
\begin{equation} 
\|\sigma_k \|  \leq C(\log \beta)\|\Omega\|_1\|h\|_{\Delta_1}\leq 
C(\log \beta)\|\Omega\|_q\|h\|_{\Delta_s}, 
\end{equation}   
where $\|\sigma_k\|=|\sigma_k|(\Bbb R^{n+m})$; 
\begin{equation} 
|\hat{\sigma}_k(\xi,\eta)| 
\leq C\|\Omega\|_q\|h\|_{\Delta_s}(\beta^{k+d} s(\xi))^{1/b_1},   
\end{equation}   
 where $d=b_1/\alpha_1$; 
\begin{equation}  
|\hat{\sigma}_k(\xi,\eta)| \leq C(\log \beta)\|\Omega\|_q\|h\|_{\Delta_s}
(\beta^{k} s(\xi))^{-\epsilon_0/(q's')}  
\end{equation}   
for some $\epsilon_0>0$; 
\begin{equation}  
 |\hat{\mu}_k(\xi,\eta)| \leq C(\log \beta)\|\Omega\|_q\|h\|_{\Delta_s}
(\beta^{k} s(\xi))^{-\epsilon_0/(q's')},    
\end{equation}   
where $\epsilon_0$ is as in (2.3); 
\begin{equation}   
 |\hat{\mu}_k(\xi,\eta) - \hat{\mu}_k(0,\eta)| \leq 
 C\|\Omega\|_q\|h\|_{\Delta_s}(\beta^{k+d} s(\xi))^{1/b_1},    
\end{equation}   
where $d$ is as in $(2.2)$. 
\par 
First we see that
\begin{equation}   
\|\sigma_k\|_1 = \int_{\beta^k}^{\beta^{k+1}}|h(r)|\|\Omega\|_1\, 
dr/r \leq C(\log \beta)\|\Omega\|_1\|h\|_{\Delta_1}. 
\end{equation}   
From this, (2.1) follows. 
Next, we show (2.2).  
Take $\nu\in \Bbb Z$ so that $2^\nu<\beta\leq 2^{\nu+1}$.  
Note that 
$$
\hat{\sigma}_k(\xi,\eta)=\int_{\beta^k<r(x)\leq \beta^{k+1}} 
 e^{-2\pi i \langle \Gamma(r(x)),\eta\rangle}
 (e^{-2\pi i \langle x,\xi\rangle}-1)h(r(x))\Omega(x')r(x)^{-\gamma}\,dx.   
 $$  
Thus 
\begin{align} 
|\hat{\sigma}_k(\xi,\eta)|  &\leq  C\int_{1<r(x)\leq \beta}   
 |x| |B_{\beta^k}\xi| |h(\beta^kr(x))\Omega(x')|
 r(x)^{-\gamma}\,dx                                    
\\ 
&\leq C\sum_{j=0}^\nu |B_{\beta^k}\xi| \|\Omega\|_1 
2^{j/\alpha_1}\int_{2^j}^{2^{j+1}}|h(\beta^kr)|\, dr/r    \notag 
\\ 
&\leq C\beta^{1/\alpha_1}
|B_{\beta^k}\xi| \|\Omega\|_1 \|h\|_{\Delta_1}.  \notag 
 \end{align}  
 Combining (2.6) and  (2.7), we have 
\begin{equation} 
 |\hat{\sigma}_k(\xi,\eta)|\leq C \|\Omega\|_1 \|h\|_{\Delta_1}
\min\left(\log\beta, \beta^{1/\alpha_1}|B_{\beta^k}\xi|\right).  
\end{equation}  
If $s(B_{\beta^k}\xi)< 1$, then $|B_{\beta^k}\xi|
\leq C(\beta^ks(\xi))^{1/b_1}$.   
Therefore,  
$$ \min\left(\log\beta, \beta^{1/\alpha_1}|B_{\beta^k}\xi|\right)\leq 
C(\beta^{k+d}s(\xi))^{1/b_1}.$$    
Using this in (2.8), we have (2.2).   
We can prove (2.5) in the same way.  
\par 
Next we prove (2.3). 
We use a method similar to that  of  \cite[p. 551]{DR}.    
 Define 
$$\tau(\xi)=\int_{\Sigma}\Omega(\theta)e^{-2\pi i\langle\xi, \theta\rangle}
\, d\sigma(\theta).   
$$  
 We need the following estimates.    
\begin{lemma} Let $L$ be the degree of the minimal polynomial of $P$.   
Then, if $0<\epsilon_0<a_2^{-1}\min(1/2,q'/L)$, we have 
$$\int_{\beta^{k}}^{\beta^{k+1}}\left|\tau(B_r\xi)\right|^2\, dr/r 
\leq C(\log \beta)(\beta^ks(\xi))^{-\epsilon_0/q'}\|\Omega\|_q^2,  $$ 
where $C$ is independent of  $\Omega\in L^q$, $q\in (1,2]$ and $\beta$. 
\end{lemma}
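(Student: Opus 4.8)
The plan is to estimate the trigonometric integral $\tau(B_r\xi)$ by interpolating between two bounds: a trivial $L^\infty$ bound and a decay estimate obtained from a van der Corput-type argument. First I would record the pointwise bound $|\tau(\eta)| \le \|\Omega\|_1 \le C\|\Omega\|_q$, valid for all $\eta$. The heart of the matter is a decay estimate of the form $|\tau(B_r\xi)| \le C (r^{L}|\xi|^{\text{something}})^{-\delta}\|\Omega\|_q$ (or an averaged version over the dyadic block), which should follow from the trigonometric integral estimate promised for Section 4, applied to the phase $\langle B_r\xi, \theta\rangle$ restricted to $\Sigma$; the nondegeneracy needed is that this phase, as a function on the surface $\Sigma = \{\langle B\theta,\theta\rangle = 1\}$, cannot be too flat, and the control of how flat it can be is governed by $L$, the degree of the minimal polynomial of $P$ (this is where the exponent $q'/L$ in the hypothesis on $\epsilon_0$ enters — one needs $L$-th order nonvanishing of the phase to get power decay, and the power one extracts after incorporating the $L^q$ norm of $\Omega$ via Hölder is $\epsilon_0/q'$-ish).

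Concretely, I would proceed as follows. Fix $\xi$ and write $\eta = B_r\xi$. Split $\Omega = \Omega_1 + \Omega_2$ where $\Omega_1 = \Omega \mathbf{1}_{\{|\Omega| \le \lambda\}}$ and $\Omega_2$ is the rest, with $\lambda$ a parameter to be chosen as a power of $s(\xi)\beta^k$. For $\Omega_2$, Chebyshev gives $\|\Omega_2\|_1 \le \lambda^{1-q}\|\Omega\|_q^q$, which is small when $\lambda$ is large. For $\Omega_1$, which is bounded, the Section 4 trigonometric estimate yields $|\tau_1(B_r\xi)| \le C\lambda (1 + |B_r\xi|)^{-c/L}$ for a suitable averaged quantity, where the $1/L$ comes from the worst-case order of contact of the linear phase with $\Sigma$. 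Then I would integrate the square over $r \in (\beta^k, \beta^{k+1})$: the substitution $r = \beta^k t$, $t \in (1,\beta)$, produces the factor $\log\beta$ from $dr/r$ after bounding the integrand by its sup (or, more carefully, using that $|B_r\xi|$ is comparable across the dyadic block up to the matrix-exponential distortion controlled by the $d_i, a_i, b_i$ inequalities stated just before the lemma). Optimizing the choice of $\lambda$ against the two contributions $\lambda^{1-q}\|\Omega\|_q^q$ and $\lambda^2(\beta^k s(\xi))^{-c/L}$ gives a bound of the shape $(\beta^k s(\xi))^{-\epsilon_0/q'}\|\Omega\|_q^2$ for $\epsilon_0$ in the stated range.

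The main obstacle I anticipate is getting the $q'$-dependence of the exponent correct and uniform, i.e. verifying that the constant $C$ really is independent of $q \in (1,2]$ and that the loss as $q \to 1$ is entirely captured by the explicit factor — this is the whole point of the lemma, since Theorem 1's $(q-1)^{-1}$ blowup must come out cleanly later. Tracking this requires being careful that the split-and-optimize step produces an exponent proportional to $(q-1)$ times something bounded, equivalently proportional to $1/q'$, rather than hiding a $q'$ inside $C$. A secondary technical point is handling the non-isotropic dilation $B_r$ on $\xi$: one cannot simply say $|B_r\xi| \sim (rs(\xi))^{\text{power}}$ globally, so I would use the two-sided comparisons $d_3|\xi|^{b_1} < s(\xi) < d_4|\xi|^{b_2}$ (for $s(\xi) \le 1$) together with $s(B_r\xi) = r\,s(\xi)$ to pass between the Euclidean norm appearing in the oscillatory estimate and the homogeneous norm appearing in the statement, restricting attention to the regime $\beta^k s(\xi)$ large (the complementary regime being absorbed into the trivial bound since the claimed estimate has a negative power of $\beta^k s(\xi)$ on the right). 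Once these bookkeeping issues are settled, assembling the pieces is routine.
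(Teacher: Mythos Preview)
Your plan has a genuine gap: you have misidentified what the Section~4 estimate provides and hence where the cancellation comes from. The corollary to Theorem~5 (stated as Lemma~2) bounds
\[
\left|\int_1^2 \exp\bigl(i\langle B_t\eta,\zeta\rangle\bigr)\,dt/t\right|\le C|\langle\eta,P\zeta\rangle|^{-1/L},
\]
i.e.\ it exploits oscillation in the \emph{dilation variable} $t$, and the exponent $1/L$ arises because the degree-$L$ minimal polynomial of $P$ controls how many $t$-derivatives of $\langle B_t\eta,\zeta\rangle$ can vanish. It gives no pointwise decay of $\tau(\eta)$ as $|\eta|\to\infty$; your sentence ``the Section~4 trigonometric estimate yields $|\tau_1(B_r\xi)|\le C\lambda(1+|B_r\xi|)^{-c/L}$'' does not follow from anything in Section~4, and the claim that $1/L$ is ``the worst-case order of contact of the linear phase with $\Sigma$'' is simply wrong (on the ellipsoid $\Sigma$ a linear phase has nondegenerate critical points, order of contact~$2$, independent of $P$). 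Moreover, pointwise decay of $\widehat{\Omega_1\,d\sigma}$ for a merely bounded amplitude $\Omega_1$ is not available in general, so even replacing Section~4 by stationary phase on $\Sigma$ would not rescue the split-and-optimize scheme.

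The paper's argument is different and uses the $r$-oscillation directly. One expands $|\tau(B_r\xi)|^2$ as a double integral over $\Sigma\times\Sigma$ and, after splitting $[\beta^k,\beta^{k+1}]$ into genuinely dyadic pieces $[\beta^k2^j,\beta^k2^{j+1}]$, $0\le j\le\nu$, integrates in $r$ \emph{first}. The inner integral is exactly of the form in Lemma~2, giving the bound $C|\langle B_{\beta^k2^j}\xi,P(\theta-\omega)\rangle|^{-\epsilon}$ for $0<\epsilon\le 1/L$. Then H\"older (not a level-set decomposition of $\Omega$) separates $\|\Omega\|_q^2$, and the remaining integral $\iint_{\Sigma\times\Sigma}|\langle P^*B_{\beta^k2^j}\xi,\theta-\omega\rangle|^{-\epsilon q'}\,d\sigma\,d\sigma$ is finite provided $\epsilon q'<1/2$ and is bounded by $C|B_{\beta^k2^j}\xi|^{-\epsilon}$; summing over $j$ produces $\log\beta$. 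The two constraints $\epsilon\le 1/L$ and $\epsilon q'<1/2$ together with the conversion $|B_{\beta^k}\xi|\ge C(\beta^ks(\xi))^{1/a_2}$ are precisely the origin of the stated range $0<\epsilon_0<a_2^{-1}\min(1/2,q'/L)$.
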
   
In proving Lemma 1 we use the following estimate,  which follows from the 
corollary to Theorem 5 in Section 4 via an integration by parts argument.   
\begin{lemma} 
Let $L$ be as in Lemma $1$.   
Then, for $\eta, \zeta\in \Bbb R^n\setminus\{0\}$ we have 
$$ \left|\int_1^2 \exp\left(i\langle B_t\eta, \zeta\rangle\right)
\,dt/t \right| \leq C
\left|\langle \eta,P\zeta\rangle\right|^{-1/L}  $$ 
for some positive constant $C$ independent of $\eta$ and $\zeta$. 
\end{lemma}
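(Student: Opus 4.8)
The plan is to estimate the oscillatory integral $\int_1^2 \exp(i\langle B_t\eta,\zeta\rangle)\,dt/t$ by recognizing that $t\mapsto \langle B_t\eta,\zeta\rangle$ is a smooth real phase function on $[1,2]$ and applying a van der Corput–type estimate, with the order of vanishing controlled by the degree $L$ of the minimal polynomial of $P$. First I would write $\phi(t)=\langle B_t\eta,\zeta\rangle$ where $B_t=t^{P^*}=\exp((\log t)P^*)$, and compute the derivatives. Since $\frac{d}{dt}B_t = t^{-1}P^*B_t = t^{-1}B_tP^*$, one gets $t\phi'(t) = \langle B_tP^*\eta,\zeta\rangle = \langle B_t\eta,P\zeta\rangle$ (using that $B_t$ is the adjoint of $A_t$ and moving $P^*$ across the inner product). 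Iterating, $(t\frac{d}{dt})^k\phi(t) = \langle B_t\eta,P^k\zeta\rangle$ for each $k\ge 1$. Evaluating near $t=1$ (or at any fixed point of $[1,2]$) the relevant quantities are the numbers $\langle B_1\eta,P^k\zeta\rangle = \langle \eta,P^k\zeta\rangle$; the point is that these cannot all vanish for $k=1,\dots,L$ unless $\langle\eta,P\zeta\rangle$ already forces nonvanishing, because the vectors $P\zeta, P^2\zeta,\dots$ span a space governed by the minimal polynomial.

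The key algebraic step is this: because the minimal polynomial of $P$ (equivalently of $P^*$) has degree $L$, the operator $P^L$ is a linear combination of $I, P,\dots,P^{L-1}$; more to the point, for a direction in which $\langle\eta,P\zeta\rangle\neq 0$, the phase $\phi$ on $[1,2]$ has some derivative of order at most $L$ bounded below in terms of $|\langle\eta,P\zeta\rangle|$. I would make this quantitative by invoking the corollary to Theorem~5 in Section~4 (which the excerpt tells us supplies exactly the needed trigonometric-integral bound), then pass from that corollary to the stated $|\langle\eta,P\zeta\rangle|^{-1/L}$ bound by an integration-by-parts argument. Concretely, write $dt/t = d(\log t)$, substitute $u=\log t$ so the integral becomes $\int_0^{\log 2}\exp(i\langle e^{uP^*}\eta,\zeta\rangle)\,du$, a genuine oscillatory integral with polynomial-exponential phase of controlled complexity; the corollary to Theorem~5 gives decay in the "size" of the phase, and integrating by parts once (to handle the $dt/t$ weight versus plain $du$, and to convert decay in the leading coefficient into decay in $\langle\eta,P\zeta\rangle$) yields the exponent $1/L$.

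The main obstacle is making the dependence on $L$ uniform and the constant $C$ genuinely independent of $\eta,\zeta$: the phase $\langle B_t\eta,\zeta\rangle$ can have its various derivatives of wildly different sizes depending on the relative position of $\eta$, $\zeta$ and the Jordan structure of $P$, so one needs the van der Corput estimate in the form that only requires a lower bound on a single derivative of some order $\le L$ — not simultaneous control — together with a uniform upper bound on all derivatives up to order $L$ on the compact interval $[1,2]$ (which follows from $\|B_t\|\le C$ there and homogeneity after normalizing $|\eta|=1$). I expect the bookkeeping to reduce cleanly to the corollary of Theorem~5 once the phase is put in the exponential form above; the substantive content — that $L$ derivatives suffice — is precisely what the minimal-polynomial hypothesis buys, via the fact that $\{P^k\zeta: k\ge 1\}$ already lies in the span of $\{P^k\zeta: 1\le k\le L\}$, so no information is lost by stopping at order $L$.
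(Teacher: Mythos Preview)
Your approach is essentially the paper's: the paper states only that Lemma~2 ``follows from the corollary to Theorem~5 in Section~4 via an integration by parts argument,'' and you have correctly identified both ingredients. However, your write-up conflates two routes that should be kept separate. Once you substitute $u=\log t$, the weight $dt/t$ becomes $du$ and there is nothing left for integration by parts to do; conversely, if you want to invoke the Corollary as stated (which bounds $\int_a^b\exp(i\langle t^{A}\eta,\zeta\rangle)\,dt$ with plain $dt$), you should \emph{not} substitute but instead set $F(b)=\int_1^b\exp(i\langle B_t\eta,\zeta\rangle)\,dt$, use the Corollary with $A=P^*$ to get $|F(b)|\le C|\langle P^*\eta,\zeta\rangle|^{-1/L}$ uniformly for $b\in[1,2]$, and then write $\int_1^2 e^{i\phi}\,dt/t=[F(t)/t]_1^2+\int_1^2 F(t)t^{-2}\,dt$.

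Also, your remark about needing to ``convert decay in the leading coefficient into decay in $\langle\eta,P\zeta\rangle$'' is unnecessary: applying the Corollary with $A=P^*$ gives the bound $C|\langle P^*\eta,\zeta\rangle|^{-1/N}$ directly, and $\langle P^*\eta,\zeta\rangle=\langle\eta,P\zeta\rangle$ while $N=\deg\phi_{P^*}=\deg\phi_P=L$. No further manipulation of the exponent or the quantity is required; the integration by parts serves only to absorb the harmless factor $1/t$ on $[1,2]$.
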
  
Proof of Lemma $1$.    
Choose  $\nu\in \Bbb Z$ such that $2^\nu<\beta\leq 2^{\nu+1}$.  
Then, we have 
\begin{align*} 
&\int_{\beta^{k}}^{\beta^{k+1}}\left|\tau(B_r\xi)\right|^2\, dr/r
\leq \sum_{j=0}^\nu \int_{\beta^{k}2^j}^{\beta^{k}2^{j+1}}  
\left|\tau(B_r\xi)\right|^2\, dr/r
\\ 
&= \sum_{j=0}^\nu 
\iint_{\Sigma\times \Sigma} \left(\int_{1}^{2}
\exp\left(-2\pi i\langle B_{\beta^k2^jr}\xi, \theta - \omega\rangle\right)
\, dr/r \right)\Omega(\theta)\bar{\Omega}(\omega)
\, d\sigma(\theta) \, d\sigma(\omega).   
\end{align*}  
By Lemma 2 we have 
$$ \left|\int_{1}^{2}
\exp\left(-2\pi i\langle B_{\beta^k2^jr}\xi, \theta - \omega\rangle\right)
\, dr/r\right|  \leq C\left|\left\langle B_{\beta^k2^j}\xi,P(\theta-\omega)
\right\rangle\right|^{-\epsilon}, 
$$ 
where  $0<\epsilon \leq 1/L$.   
Using H\"{o}lder's inequality, if $0<\epsilon<\min(1/(2q'),1/L)$, we see  that 
\begin{multline*} 
\iint_{\Sigma\times \Sigma}\left|\langle B_{\beta^k2^j}\xi,
P(\theta - \omega)\rangle\right|^{-\epsilon}
\left|\Omega(\theta)\bar{\Omega}(\omega)\right|
\, d\sigma(\theta) \, d\sigma(\omega) 
\\ 
\leq \left(\iint_{\Sigma\times \Sigma}\left|
\langle P^*B_{\beta^k2^j}\xi,
\theta - \omega\rangle\right|^{-\epsilon q'}
\, d\sigma(\theta) \, d\sigma(\omega)\right)^{1/q'}\|\Omega\|_q^2
\leq C|B_{\beta^k2^j}\xi|^{-\epsilon}\|\Omega\|_q^2, 
\end{multline*}    
where the last inequality follows from  (3) of Section 1 
(see \cite[p. 553]{DR}).  
Therefore 
\begin{equation} 
\int_{\beta^{k}}^{\beta^{k+1}}\left|\tau(B_r\xi)\right|^2\, dr/r 
\leq C\|\Omega\|_q^2\sum_{j=0}^\nu|B_{\beta^k2^j}\xi|^{-\epsilon} 
\quad (0<\epsilon<\min(1/(2q'),1/L)). 
\end{equation} 
If $s(B_{\beta^k}\xi)\geq 1$, 
 $|B_{\beta^k2^j}\xi|\geq C(\beta^k2^js(\xi))^{1/a_2}$ ($0\leq j\leq \nu$). 
 Thus  we see that 
\begin{equation} 
 \sum_{j=0}^\nu|B_{\beta^k2^j}\xi|^{-\epsilon} 
 \leq \sum_{j=0}^\nu C(\beta^k2^js(\xi))^{-\epsilon/a_2}
 \leq  C(\log\beta)(\beta^k s(\xi))^{-\epsilon/a_2},     
 \end{equation} 
 where $C$ is independent of $q$.   
By (2.9) and (2.10) we have the estimate of Lemma 1 when 
$s(B_{\beta^k}\xi)\geq 1$. If   $s(B_{\beta^k}\xi) < 1$, 
the estimate of Lemma 1 follows from the inequality 
$|\tau(\xi)|\leq \|\Omega\|_1$.  This completes the proof of Lemma 1.  
\par 
Now, by H\"older's inequality we have 
\begin{align} 
|\hat{\sigma}_k(\xi,\eta)| 
&= \left| \int_{\beta^{k}}^{\beta^{k+1}} 
e^{-2\pi i \langle \Gamma(r),\eta\rangle}
h(r) \tau(B_r\xi)\, 
dr/r \right|            \\
&\leq \left(\int_{\beta^{k}}^{\beta^{k+1}}|h(r)|^s\,dr/r \right)^{1/s}
\left(\int_{\beta^{k}}^{\beta^{k+1}}\left|\tau(B_r\xi)\right|^{s'}\,
dr/r\right)^{1/s'}      \notag 
\\ 
&\leq C(\log \beta)^{1/s}\|h\|_{\Delta_s}
\|\Omega\|_1^{(s'-2)/s'}\left(\int_{\beta^{k}}^{\beta^{k+1}}
\left|\tau(B_r\xi)\right|^2\, dr/r\right)^{1/s'},  
\notag 
\end{align}  
where we have used the estimate $|\tau(\xi)|\leq \|\Omega\|_1$ to get 
the last inequality.   
By (2.11) and Lemma 1 we have (2.3).   The estimate (2.4) can be proved 
similarly.  
\par 
Let $B_{qs}=(1-\beta^{-\theta \epsilon_0/(q's')})^{-1}$, where 
$\beta\geq 2$,  $\theta \in (0,1)$ and $\epsilon_0$ is as in (2.3) and 
(2.4).    
To prove Theorems 1 and 3, we use the following:
\begin{proposition}  Suppose that $\Omega\in L^q$, $q\in (1,2]$ and 
$h\in \Delta_s$, $s\in (1,2]$.   Let $|1/p-1/2|<(1-\theta)/(s'(1+\theta))$.  
Then,  we have 
$$\|Tf\|_p \leq C(\log\beta)\|h\|_{\Delta_s}\|\Omega\|_q 
B_{qs}B_{q2}^{|1/p-1/p'|}\|f\|_p,  $$ 
where $C$ is a constant independent of $\Omega$, $h$, $q$, $s$ and $\beta$. 
\end{proposition}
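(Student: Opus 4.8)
The plan is to prove Proposition 1 by the classical Duoandikoetxea–Rubio de Francia method of Littlewood–Paley decomposition combined with almost-orthogonality, using the five estimates (2.1)–(2.5) that have already been established. First I would fix a $C^\infty$ radial bump function on $\Bbb R^n$ adapted to the dilation structure $\{B_t\}$: choose $\phi$ supported where $s(\xi)$ lies in an annulus, say $1/\beta^d \le s(\xi) \le \beta$ (matching the exponents $d$ and the scale $\beta$ appearing in (2.2)–(2.5)), with $\sum_{j\in\Bbb Z}\phi(B_{\beta^j}\xi)^2 = 1$ for $\xi\ne 0$. Let $\Delta_j$ denote the Fourier multiplier operator with symbol $\phi(B_{\beta^j}\xi)$ (acting in the $\xi$-variable only, leaving the $z\in\Bbb R^m$ variable untouched). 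Then decompose
\begin{equation*}
Tf = \sum_{k\in\Bbb Z}\sigma_k * f = \sum_{j\in\Bbb Z}\Big(\sum_{k\in\Bbb Z}\Delta_{k+j}\,\sigma_k * \Delta_{k+j} f\Big) =: \sum_{j\in\Bbb Z} T_j f.
\end{equation*}

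**Next I would estimate each $T_j$ in two ways.** For the $L^2$ bound, I use almost-orthogonality: the supports of the symbols $\phi(B_{\beta^{k+j}}\xi)$ for fixed $j$ have bounded overlap in $k$, so $\|T_jf\|_2^2 \le C\sum_k \|\Delta_{k+j}\sigma_k*\Delta_{k+j}f\|_2^2$. On the support of $\phi(B_{\beta^{k+j}}\xi)$ one has $s(\xi)\sim \beta^{-(k+j)}$, hence $\beta^k s(\xi)\sim \beta^{-j}$. When $j\ge 0$ I feed this into (2.3)–(2.4) to get decay $|\hat\sigma_k(\xi,\eta)|\le C(\log\beta)\|\Omega\|_q\|h\|_{\Delta_s}\,\beta^{-j\epsilon_0/(q's')}$; when $j< 0$ I use (2.2) (and (2.5)) which give $|\hat\sigma_k(\xi,\eta)|\le C\|\Omega\|_q\|h\|_{\Delta_s}\,\beta^{(j+ \text{const})/b_1}$, i.e.\ geometric decay as $j\to-\infty$. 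Together with $\sum_k\|\Delta_{k+j}f\|_2^2\le C\|f\|_2^2$ this yields
\begin{equation*}
\|T_jf\|_2 \le C(\log\beta)\|h\|_{\Delta_s}\|\Omega\|_q\,\min\big(\beta^{-j\epsilon_0/(q's')},\ \beta^{cj}\big)\,\|f\|_2 .
\end{equation*}
For the $L^p$ bound ($1<p<\infty$), I drop the cancellation and estimate crudely: by (2.1), $\|\sigma_k*g\|_p\le \|\sigma_k\|\,\|g\|_p \le C(\log\beta)\|\Omega\|_q\|h\|_{\Delta_s}\|g\|_p$; combined with the vector-valued Littlewood–Paley inequality $\big\|(\sum_k|\Delta_{k+j}f|^2)^{1/2}\big\|_p\le C_p\|f\|_p$ (valid for the non-isotropic dilations $B_t$, using the weighted theory / Fefferman–Stein inequality for the associated square function) and the Cauchy–Schwarz / Littlewood–Paley dual inequality in the $k$-sum, I obtain $\|T_jf\|_p\le C_p(\log\beta)\|h\|_{\Delta_s}\|\Omega\|_q\|f\|_p$ with a bound \emph{uniform in $j$}. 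Actually, to get the sharp power of $B_{q2}$ I would instead also run a square-function argument on the $L^p$ side to pick up a mild growth like $B_{q2}$ rather than a constant — this is where the exponent $|1/p-1/p'|$ enters.

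**Then I interpolate.** Interpolating the $L^2$ estimate for $T_j$ against the $L^{p_0}$ estimate (for $p_0$ close to $1$ or $\infty$, chosen so that the admissibility condition $|1/p-1/2|<(1-\theta)/(s'(1+\theta))$ holds) produces, for $p$ in the stated range,
\begin{equation*}
\|T_jf\|_p \le C(\log\beta)\|h\|_{\Delta_s}\|\Omega\|_q\,\delta^{|j|}\,\|f\|_p
\end{equation*}
with $\delta = \delta(p,q,s,\theta,\beta)<1$; tracking the interpolation exponents, the decay rate for $j\ge 0$ comes out as $\beta^{-\theta\epsilon_0|1/p-1/2|\cdots}$-type, and summing the geometric series $\sum_j \delta^{|j|}$ gives precisely the factor $B_{qs}B_{q2}^{|1/p-1/p'|}$ (the $B_{qs}$ factor from the $j\ge0$ tail governed by (2.3)–(2.4), the $B_{q2}$ power from balancing the interpolation). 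Finally $\|Tf\|_p\le\sum_j\|T_jf\|_p\le C(\log\beta)\|h\|_{\Delta_s}\|\Omega\|_q B_{qs}B_{q2}^{|1/p-1/p'|}\|f\|_p$, with $C$ independent of $\Omega,h,q,s,\beta$ since all intermediate constants were.

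**The main obstacle** I expect is bookkeeping the dependence on $q$, $s$, and $\beta$ so that nothing blows up: the exponents $\epsilon_0/(q's')$ degenerate as $q,s\to 1$, which is exactly why the geometric sums are collected into the quantities $B_{qs}$ and $B_{q2}$ rather than absorbed into $C$; one must verify that the vector-valued Littlewood–Paley constants and the interpolation constants do not secretly depend on $q$ or $\beta$. A secondary technical point is justifying the non-isotropic Littlewood–Paley / Fefferman–Stein inequality with the correct uniformity — but this is standard for the dilation groups $B_t=t^{P^*}$ satisfying the norm equivalences recorded at the start of Section 2, so I would cite it rather than reprove it. The cancellation-free $L^p$ bound and the almost-orthogonality on $L^2$ are routine given (2.1)–(2.5); the real content is the choice of the parameter $p_0$ consistent with the range $|1/p-1/2|<(1-\theta)/(s'(1+\theta))$ and the precise exponent arithmetic in the interpolation.
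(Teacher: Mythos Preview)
Your $L^2$ almost-orthogonality estimate and the overall decomposition $Tf=\sum_j V_jf$ (your $T_jf$) match the paper. The gap is in the $L^p$ step. You propose to get a uniform-in-$j$ bound $\|T_jf\|_p\le C(\log\beta)\|\Omega\|_q\|h\|_{\Delta_s}\|f\|_p$ from the total-variation bound (2.1) together with scalar Littlewood--Paley. This does not work: after one application of Littlewood--Paley you need the \emph{vector-valued} inequality
\[
\Big\|\Big(\sum_k|\sigma_k*g_k|^2\Big)^{1/2}\Big\|_v \le C\Big\|\Big(\sum_k|g_k|^2\Big)^{1/2}\Big\|_v,
\]
and the individual bounds $\|\sigma_k*g\|_p\le\|\sigma_k\|\|g\|_p$ do not imply it. The standard mechanism (the Lemma on p.~544 of \cite{DR}) requires in addition a bound on the maximal operator $\sup_k||\sigma_k|*f|$; combining $\|\sigma^*(f)\|_r\le B\|f\|_r$ with $\sup_k\|\sigma_k\|\le A$ yields the vector-valued bound on $L^v$ for $|1/v-1/2|=1/(2r)$ with constant $(AB)^{1/2}$.

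The real difficulty, which your sketch misses, is that for general $\Gamma$ and $h\in\Delta_s$ with $s<\infty$ no such maximal bound is directly available: Proposition~3(1) needs $h\in\Delta_\infty$ (its proof uses $M_\Gamma$ on the piece $\Psi^*$), while Proposition~3(2) needs $\Gamma\equiv 0$. The paper closes this gap with the Fan--Pan device from \cite{FP}: by Schwarz,
\[
|\sigma_k*f|^2\le C(\log\beta)\|h\|_{\Delta_s}^s\|\Omega\|_1\,\tau_k*|f|^2,
\]
where $\tau_k$ carries the weight $|h|^{2-s}$; then by H\"older $|\tau_k*f|\le C(\lambda_k*|f|^{u'})^{1/u'}$ with $u=s/(2-s)$, where $\lambda_k$ has \emph{no} $h$-factor at all. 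Proposition~3(1) now applies to $\{\lambda_k\}$ (effectively $h\equiv 1\in\Delta_\infty$), giving a maximal bound on $L^{r/u'}$ for $r/u'>1+\theta$ with constant involving $B_{q2}$, and hence the vector-valued inequality (2.22) for $\{\sigma_k\}$ with the factor $B_{q2}^{1/r}$. This is precisely why the second factor in the statement is $B_{q2}$ rather than $B_{qs}$, and why the admissible range is $|1/v-1/2|=1/(2r)<1/(s'(1+\theta))$; interpolating with the $L^2$ estimate then turns $1/r$ into $(1-\theta)/r=|1/p-1/p'|$ and gives the stated range on $p$. Your remark ``run a square-function argument on the $L^p$ side to pick up a mild growth like $B_{q2}$'' is too vague to recover either the constant or the range.
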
   
\begin{proposition} Suppose  that $\Gamma\equiv 0$. 
 Let $\Omega\in L^q$, $h\in \Delta_s$, 
$q, s \in (1,2]$. Then, for  $p \in (1+\theta, (1+\theta)/\theta)$ we have 
$$\|Tf\|_p \leq C(\log \beta)\|\Omega\|_q\|h\|_{\Delta_s}
B_{qs}^{1 + |1/p - 1/p'|}\|f\|_p, $$ 
where $C$ is a constant independent of $\Omega$, $h$, $q$, $s$ 
and $\beta$. 
\end{proposition}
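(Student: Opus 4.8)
The plan is to run the Duoandikoetxea–Rubio de Francia machinery, but now taking full advantage of the fact that $\Gamma\equiv 0$, so that the measures $\sigma_k$ are supported on $\mathbb{R}^n$ and the Fourier transforms $\hat\sigma_k(\xi,\eta)$ do not depend on $\eta$; we simply write $\hat\sigma_k(\xi)$. First I would fix a smooth radial (in the sense of the dilation group $\{B_t\}$) partition of unity: choose $\phi\in C^\infty(\mathbb{R}^n)$ with $\phi(\xi)=1$ for $s(\xi)\le 1$ and $\phi(\xi)=0$ for $s(\xi)\ge\beta$, set $\psi_j(\xi)=\phi(B_{\beta^{-j}}\xi)-\phi(B_{\beta^{-j+1}}\xi)$ so that $\sum_{j\in\mathbb{Z}}\psi_j\equiv 1$ on $\mathbb{R}^n\setminus\{0\}$, and define the multiplier operators $S_j$ by $\widehat{S_jf}=\psi_j\hat f$. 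Then decompose $Tf=\sum_{j\in\mathbb{Z}}T_jf$ where $T_jf=\sum_{k\in\mathbb{Z}}\sigma_k*S_{k+j}f$. Because $\{\psi_{k+j}\}_k$ is, for each fixed $j$, a Littlewood–Paley decomposition adapted to $\{B_t\}$, standard vector-valued (non-isotropic) Littlewood–Paley theory gives $\bigl\|(\sum_k|S_{k+j}f|^2)^{1/2}\bigr\|_p\le C_p\|f\|_p$ for all $1<p<\infty$, with $C_p$ independent of $j$.

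Next I would estimate each $T_j$ on $L^2$ and on $L^p$ separately. For the $L^2$ bound: by Plancherel, $\|T_jf\|_2^2=\int|\sum_k\psi_{k+j}(\xi)\hat\sigma_k(\xi)\hat f(\xi)|^2\,d\xi$, and since on the support of $\psi_{k+j}$ one has $s(\xi)\sim\beta^{-(k+j)}$, the relevant bound on $\hat\sigma_k(\xi)$ is: (2.3) when $j\le 0$ (i.e. $\beta^ks(\xi)\gtrsim\beta^{-j}\ge 1$), giving a factor $(\log\beta)\|\Omega\|_q\|h\|_{\Delta_s}\beta^{j\epsilon_0/(q's')}$; and (2.2) when $j\ge 0$ (i.e. $\beta^ks(\xi)\lesssim\beta^{-j}\le 1$), giving a factor $\|\Omega\|_q\|h\|_{\Delta_s}\beta^{(d-j)/b_1}$. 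Combined with the bounded overlap of the supports of the $\psi_{k+j}$ in $k$, this yields $\|T_jf\|_2\le C(\log\beta)\|\Omega\|_q\|h\|_{\Delta_s}\beta^{-\epsilon_1|j|}\|f\|_2$ for a suitable $\epsilon_1>0$ (one may take $\epsilon_1$ of size $\epsilon_0/(q's')$ up to the harmless factor $\log\beta$ absorbed into the sum). For the $L^p$ bound, $1<p<\infty$: by the Littlewood–Paley square-function estimate applied twice together with the total-variation bound (2.1) (which controls convolution with $\sigma_k$ pointwise by $C(\log\beta)\|\Omega\|_q\|h\|_{\Delta_s}$ times a non-isotropic Hardy–Littlewood maximal operator, or more simply by the triangle inequality on the square function), one gets $\|T_jf\|_p\le C_p(\log\beta)\|\Omega\|_q\|h\|_{\Delta_s}\|f\|_p$, uniformly in $j$.

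Finally I would interpolate: writing $p_0$ with $1/p_0=\tfrac12+\tfrac{1}{2}\cdot\tfrac{1}{p}$-type relation — more precisely, fix $p\in(1+\theta,(1+\theta)/\theta)$, which is exactly the range making $|1/p-1/p'|<1$ with room to spare relative to $\theta$; then interpolate the uniform $L^{p}$ bound with the geometrically decaying $L^2$ bound to obtain $\|T_jf\|_p\le C(\log\beta)\|\Omega\|_q\|h\|_{\Delta_s}\beta^{-\epsilon_1(1-\vartheta)|j|}\|f\|_p$, where $\vartheta\in(0,1)$ is the interpolation parameter determined by $p$; the condition on $p$ guarantees $\vartheta$ can be taken bounded away from $1$ in a way that makes the exponent comparable to $\epsilon_0/(q's')$ times a fixed power, so that summing the geometric series in $j$ produces precisely the factor $B_{qs}=(1-\beta^{-\theta\epsilon_0/(q's')})^{-1}$, and the two applications of interpolation (at $p$ and at the conjugate exponent $p'$, exploiting self-duality of the construction) account for the exponent $1+|1/p-1/p'|$ on $B_{qs}$. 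Summing $Tf=\sum_jT_jf$ then gives the claimed bound.

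The main obstacle I expect is bookkeeping the exact power of $B_{qs}$: one must match the decay rate $\epsilon_1$ coming from (2.3) against the exponent $\theta\epsilon_0/(q's')$ appearing in the definition of $B_{qs}$, and verify that after interpolation the surviving rate is at least $\theta\epsilon_0/(q's')$ (so the geometric sum is dominated by $B_{qs}$ rather than by a worse quantity). This is where the specific range $p\in(1+\theta,(1+\theta)/\theta)$ is used, and where care is needed to keep all constants independent of $q,s,\beta$; the factor $|1/p-1/p'|$ in the exponent is simply the price of the two-sided interpolation, and since $\Gamma\equiv 0$ there is no extra $M_\Gamma$ term to control, which is why — unlike Proposition 1 — no $B_{q2}$ factor appears and the range of $p$ is the full reflexive interval rather than the narrower one tied to $s'$.
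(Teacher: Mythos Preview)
Your argument has a genuine gap at the ``uniform $L^p$ bound'' step. You claim that the total-variation bound (2.1) lets you control $|\sigma_k*g|$ pointwise by $C(\log\beta)\|\Omega\|_q\|h\|_{\Delta_s}$ times the non-isotropic Hardy--Littlewood maximal function of $g$, and hence deduce the vector-valued inequality $\bigl\|\bigl(\sum_k|\sigma_k*g_k|^2\bigr)^{1/2}\bigr\|_p\le C(\log\beta)\|\Omega\|_q\|h\|_{\Delta_s}\bigl\|\bigl(\sum_k|g_k|^2\bigr)^{1/2}\bigr\|_p$. But this pointwise domination is false: the density of $\mu_k=|\sigma_k|$ is $|h(r(y))\Omega(y')|r(y)^{-\gamma}$, and since $h$ and $\Omega$ are only in $\Delta_s$ and $L^q$ they need not be bounded, so $\mu_k$ is \emph{not} majorized by a multiple of $r(y)^{-\gamma}\chi_{E_k}$. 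The alternative you mention (``triangle inequality on the square function'') does not give the vector-valued bound either: the inequality $\|(\sum|T_kg_k|^2)^{1/2}\|_p\le(\sup_k\|T_k\|_{p\to p})\|(\sum|g_k|^2)^{1/2}\|_p$ is simply false for varying $T_k$ when $p\neq 2$. Without this step you have no uniform $L^p$ estimate on $T_jf$ to interpolate against the decaying $L^2$ estimate.

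What the paper actually does is supply the missing vector-valued inequality via Proposition~3(2), the maximal estimate $\|\mu^*f\|_p\le C(\log\beta)\|\Omega\|_q\|h\|_{\Delta_s}B_{qs}^{2/p}\|f\|_p$ for $p>1+\theta$, which is itself proved by a bootstrap (the inductive sequence $\{p_m\}$ descending to $1+\theta$). Feeding this into the Duoandikoetxea--Rubio de Francia lemma yields (2.22) with constant $C(\log\beta)\|\Omega\|_q\|h\|_{\Delta_s}B_{qs}^{1/r}$, not with a constant free of $B_{qs}$. The extra $B_{qs}^{1/r}$ survives the interpolation with $L^2$ as $B_{qs}^{(1-\theta)/r}=B_{qs}^{|1/p-1/p'|}$, and the geometric sum in $j$ contributes one more $B_{qs}$, giving the stated exponent $1+|1/p-1/p'|$. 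Your explanation of that exponent via ``two applications of interpolation at $p$ and $p'$'' does not match the mechanism; the $|1/p-1/p'|$ part is the cost of the maximal function, and the restriction $p\in(1+\theta,(1+\theta)/\theta)$ is exactly the range in which Proposition~3(2) is available.
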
 
To prove 
Propositions 1 and 2, we need the following:  
\begin{proposition} Let $\mu^*(f)(x) = \sup_k |\mu_k*f(x)|$.  
Let $\Omega\in L^q$, $q\in (1,2]$.  
\begin{enumerate}  
\renewcommand{\labelenumi}{(\arabic{enumi})}  
\item 
If $h\in \Delta_\infty$, for $p >1+\theta$  we have 
$$\|\mu^*(f)\|_p \leq C(\log \beta)\|\Omega\|_q\|h\|_{\Delta_{\infty}}
B_{q2}^{2/p}\|f\|_p,   $$ 
where  $C$ is a constant independent of $\Omega$, $h$,  $q$ and $\beta$. 
 \item 
Suppose that $\Gamma\equiv 0$. Let $h\in \Delta_s$, $s\in(1,2]$. Then,
 we have 
 $$\|\mu^*(f)\|_p \leq C(\log \beta)\|\Omega\|_q\|h\|_{\Delta_{s}}
B_{qs}^{2/p}\|f\|_p   $$   
for $p >1+\theta$, where $C$ is independent of $\Omega$, $q$, $h$, $s$ 
 and $\beta$.  
\end{enumerate}  
\end{proposition}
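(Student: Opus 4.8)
Here is a plan.

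\medskip
\noindent
The strategy is to split $\mu^\ast$ into a ``low frequency'' maximal operator, which is dominated pointwise by a composition of classical bounded maximal operators (and hence carries \emph{no} factor $B_{q2}$), plus a ``high frequency'' remainder built from pieces to which the decay estimate (2.4) applies, and then to interpolate. Fix a Littlewood--Paley family $\{Q_l\}_{l\in\Bbb Z}$ adapted to the norm $s$ of the transposed dilations $\{B_t\}$: choose $\psi\in C^\infty(\Bbb R_+)$ supported in $[1,\beta]$ with $\sum_l\psi(\beta^{-l}t)=1$ for $t>0$, and set $\widehat{Q_lg}(\xi,\eta)=\psi(\beta^{-l}s(\xi))\hat g(\xi,\eta)$, so that $Q_l$ localizes frequencies to $s(\xi)\approx\beta^l$; put $P_k=\sum_{l\le -k-D}Q_l$, where $D=D(d,b_1)$ is a fixed integer, large enough that $(\beta^{k+d}s(\xi))^{1/b_1}\le 1$ on $\supp\widehat{P_k}$ and small enough that for $1\le j\le D-1$ the non-isotropic variation scale of $Q_{-k-D+j}$ still exceeds the extent $\beta^{k+1}$ of $E_k$. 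Decomposing $\mu_k\ast f=\mu_k\ast P_kf+\sum_{j\ge 1}\mu_k\ast Q_{-k-D+j}f$ gives
$$\mu^\ast f\le\sup_k|\mu_k\ast P_kf|+\sum_{1\le j\le D-1}V_jf+\sum_{j\ge D}V_jf,\qquad V_jf:=\sup_k|\mu_k\ast Q_{-k-D+j}f|.$$

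\medskip
\noindent
For the first term and the middle pieces $1\le j\le D-1$, a Peetre-type maximal estimate (the relevant band-limited function varies slowly at a scale $\gtrsim\beta^{k+1}$, i.e.\ over a non-isotropic cell containing $E_k$) gives $|P_kf(x-y,w)|\le C\mathcal M^{(1)}(P_kf(\cdot,w))(x)$ for $y\in E_k$, where $\mathcal M^{(1)}$ is the non-isotropic Hardy--Littlewood maximal operator on $\Bbb R^n$ (with respect to $\{A_t\}$); since $\mu_k\ge 0$ with density $|h(r(y))\Omega(y')|r(y)^{-\gamma}\mathbf 1_{E_k}$, splitting $E_k$ into $\lesssim\log\beta$ dyadic shells and using the hypothesis that $M_\Gamma$ is bounded yields
$$\sup_k|\mu_k\ast P_kf|+\sum_{1\le j\le D-1}V_jf\le C(\log\beta)\|\Omega\|_1\|h\|_{\Delta_1}\,M_\Gamma^{(2)}\bigl(\mathcal M^{(1)}\bigr)^2f,$$
with $M_\Gamma^{(2)}$ acting in $z$; this is bounded on $L^p$ for all $p>1$, with constant $\le C_p(\log\beta)\|\Omega\|_q\|h\|_{\Delta_\infty}$. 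For $j\ge D$, on $\supp\widehat{Q_{-k-D+j}}$ one has $\beta^ks(\xi)\approx\beta^{j-D}\ge 1$, so (2.4) with $s=2$, Plancherel, and the Littlewood--Paley square function give $\|V_jf\|_2\le\bigl\|(\sum_k|\mu_k\ast Q_{-k-D+j}f|^2)^{1/2}\bigr\|_2\le C(\log\beta)\|\Omega\|_q\|h\|_{\Delta_\infty}\beta^{-(j-D)\epsilon_0/(2q')}\|f\|_2$; on the other hand, using $|\mu_k\ast Q_{-k-D+j}f|\le\mu_k\ast|Q_{-k-D+j}f|$, the pointwise bound $\sup_l|Q_lf|\le C\mathcal M^{(1)}f$, and $\mu_k\ast G(x,z)\le C(\log\beta)\|h\|_{\Delta_\infty}\int_\Sigma|\Omega(\theta)|\,\mathcal N_\theta G(x,z)\,d\sigma(\theta)$, where $\mathcal N_\theta$ is the maximal operator along $t\mapsto(A_t\theta,\Gamma(t))$, one gets (for all $j$ and all $p>1$)
$$\|V_jf\|_p\le C_p(\log\beta)\|\Omega\|_q\|h\|_{\Delta_\infty}\|f\|_p,$$
provided the $\mathcal N_\theta$ are bounded on $L^p(\Bbb R^{n+m})$, $p>1$, uniformly in $\theta\in\Sigma$.

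\medskip
\noindent
The uniform $L^p$ ($p>1$) boundedness of the curve maximal operators $\mathcal N_\theta$ is where the hypothesis on $M_\Gamma$ really enters (for $\Gamma\equiv0$ it is the classical maximal operator along the homogeneous curve $t\mapsto A_t\theta$, which is standard), and I expect the uniformity in $\theta$ to be the main technical obstacle. Granting it, one concludes as follows. Summing the $j\ge D$ estimates gives $\|\mu^\ast f\|_2\le C(\log\beta)\|\Omega\|_q\|h\|_{\Delta_\infty}B_{q2}\|f\|_2$, since $\sum_{j\ge D}\beta^{-(j-D)\epsilon_0/(2q')}\le(1-\beta^{-\epsilon_0/(2q')})^{-1}\le B_{q2}$. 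For $p\ge 2$, interpolate this with the trivial bound $\|\mu^\ast f\|_\infty\le C(\log\beta)\|\Omega\|_q\|h\|_{\Delta_\infty}\|f\|_\infty$ (from $\|\mu_k\|\le C(\log\beta)\|\Omega\|_1\|h\|_{\Delta_1}$), which produces exactly the exponent $2/p$ on $B_{q2}$. For $1+\theta<p<2$, interpolate, for each $j\ge D$, the $L^2$ decay estimate with the no-decay $L^{p_0}$ estimate for a fixed $p_0\in(1,1+\theta)$: this gives $\|V_jf\|_p\le C_p(\log\beta)\|\Omega\|_q\|h\|_{\Delta_\infty}\beta^{-(j-D)\lambda\epsilon_0/(2q')}\|f\|_p$ with $\lambda=\lambda(p,p_0)\to 2/p'$ as $p_0\downarrow1$; since $p>1+\theta$ is equivalent to $2/p'>\theta$ (using $1+\theta\ge 2/(2-\theta)$ for $\theta\in(0,1)$), $p_0$ may be chosen so that $\lambda>\theta$, whence $\sum_{j\ge D}\beta^{-(j-D)\lambda\epsilon_0/(2q')}\le(1-\beta^{-\lambda\epsilon_0/(2q')})^{-1}\le(1-\beta^{-\theta\epsilon_0/(2q')})^{-1}=B_{q2}\le B_{q2}^{2/p}$; adding the low-frequency and middle terms (already bounded by $C_p(\log\beta)\|\Omega\|_q\|h\|_{\Delta_\infty}B_{q2}^{2/p}\|f\|_p$) proves (1). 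Part (2) is entirely analogous: with $\Gamma\equiv0$ the factor $M_\Gamma^{(2)}(\mathcal M^{(1)})^2$ becomes $(\mathcal M^{(1)})^2$, one carries $\|h\|_{\Delta_s}$ throughout by applying H\"older in $\Delta_s$ in place of passing to $\Delta_\infty$, (2.4) is used with the given $s$ so the decay rate is $\epsilon_0/(q's')$ and the geometric series is $\le B_{qs}$, and the $L^\infty$ bound needs only $\|h\|_{\Delta_1}\le C\|h\|_{\Delta_s}$; this gives $B_{qs}^{2/p}$.
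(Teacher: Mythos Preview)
Your decomposition and the treatment of the low-frequency piece $\sup_k|\mu_k*P_kf|$ are reasonable, and for $p\ge 2$ your interpolation with the trivial $L^\infty$ bound is fine. The genuine gap is in the range $1+\theta<p<2$: to get the ``no-decay'' $L^{p_0}$ estimate for $V_jf=\sup_k|\mu_k*Q_{-k-D+j}f|$ you invoke the curve maximal operators $\mathcal N_\theta$ along $t\mapsto(A_t\theta,\Gamma(t))$ in $\Bbb R^{n+m}$, uniformly in $\theta\in\Sigma$. This is \emph{not} implied by the standing hypothesis, which only gives $L^p(\Bbb R^m)$-boundedness of $M_\Gamma$ acting in the $z$-variable; for general $\Gamma$ in the class considered there is no known way to deduce uniform boundedness of the full $(n+m)$-dimensional curve maximal operators from that alone. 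You flag this yourself as ``the main technical obstacle,'' but without it the interpolation below $L^2$ collapses, so part (1) is unproved for $p<2$. (For part (2) with $\Gamma\equiv 0$ the curve operators $\mathcal N_\theta$ along $t\mapsto A_t\theta$ are indeed classically bounded, but carrying $\|h\|_{\Delta_s}$ through the pointwise domination by $\mathcal N_\theta$ via H\"older, as you sketch, does not give the clean factor you claim; one would need a more careful argument.)

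The paper avoids curve maximal operators entirely, and this is precisely the point of its method. It replaces $\mu_k$ by $\nu_k$ with $\hat\nu_k(\xi,\eta)=\hat\mu_k(\xi,\eta)-\hat\varphi_k(\xi)\hat\mu_k(0,\eta)$; the subtracted piece has \emph{product} structure, so $\Psi^*(|f|)$ is controlled by the composition of $M_\Gamma$ in $z$ and the non-isotropic Hardy--Littlewood maximal function in $x$, using only the given hypothesis. The remaining square function $g(f)=(\sum_k|\nu_k*f|^2)^{1/2}$ is then bounded by the Duoandikoetxea--Rubio de Francia bootstrap: via Rademacher functions one reduces to $U_\epsilon f=\sum_k\epsilon_k\nu_k*f$, and one proves $\|U_\epsilon f\|_{p_m}\le CAB^{2/p_m}\|f\|_{p_m}$ inductively along a sequence $p_m\downarrow 1+\theta$. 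The inductive step uses only (i) the $L^2$ Fourier-decay estimate (2.12) and (ii) the vector-valued inequality obtained from the \emph{previously established} bound $\|\nu^*(f)\|_{p_m}\le CAB^{2/p_m}\|f\|_{p_m}$ together with $\|\nu_k\|\le CA$. No maximal operator beyond $M_\Gamma$ and $M$ ever appears. That bootstrap is the idea you are missing; your argument would need an independent, uniform $L^{p_0}$ input below the $L^2$ endpoint, and the hypothesis does not supply one.
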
 
\begin{proof} Since the estimate 
$\|\mu^*(f)\|_\infty\leq C(\log\beta)\|\Omega\|_1\|h\|_{\Delta_1}
\|f\|_\infty$ follows from (2.1), by interpolation, to prove (1) and (2) of 
Proposition 3 we may assume $p\in(1+\theta,2]$.  
\par 
First, we give a proof of part (1).  
Define measures $\nu_k$ on $\Bbb R^n\times\Bbb R^m$ by 
$$\hat{\nu}_k(\xi,\eta) = \hat{\mu}_k(\xi,\eta) - \hat{\Psi}_k(\xi,\eta),$$  
where $\hat{\Psi}_k(\xi,\eta)=\hat{\varphi}_k(\xi)\hat{\mu}_k(0,\eta)$ with  
 $\varphi_k (x) = \beta^{-k\gamma}\varphi (A_{\beta^{-k}}x), 
\varphi \in C_0^{\infty}$.  
We assume that $\varphi$ is supported in $\{r(x) \leq 1 \},     
\hat{\varphi}(0) = 1$ and $ \varphi \geq 0$.  
Then  by (2.1), (2.4) and (2.5), for $q, s\in (1,2]$,  we have 
$$\left|\hat{\nu}_k(\xi,\eta) \right| \leq 
C(\log \beta)\|\Omega\|_q\|h\|_{\Delta_s} 
\min \left(1, (\beta^{k+d}s(\xi))^{1/b_1}, 
(\beta^{k} s(\xi))^{-\epsilon_0/(q's')} \right).   $$ 
We may assume that $\epsilon_0$ is small enough so that 
$\epsilon_0/4\leq 1/b_1$.   Then, we see that  
\begin{equation} 
\left|\hat{\nu}_k(\xi,\eta) \right| \leq 
CA\min \left(1, (\beta^{k+d}s(\xi))^{\alpha}, 
(\beta^{k} s(\xi))^{-\alpha} \right),  
\end{equation}
where $A=(\log \beta)\|\Omega\|_q\|h\|_{\Delta_\infty}$ and 
$\alpha=\epsilon_0/(2q')$.    
\par 
Let
$$g(f)(x,z) = \left( \sum_{k=-\infty}^\infty \left|\nu_k *f(x,z) \right |^2 
\right)^{1/2}.$$
Then $\mu^*(f) \leq g(f) + \Psi^*(|f|)$, 
where $\Psi^*(f)=\sup_k||\Psi_k|*f|$.  
Let 
   $$Mg(x)= \sup_{t>0}t^{-\gamma}\int_{r(x-y)<t}|g(y)|\,dy$$ 
 be the Hardy--Littlewood maximal function on $\Bbb R^n$ with respect to 
 the function $r$. 
By the $L^p$ boundedness of $M_\Gamma$ and $M$, 
it is easy to see that 
$\|\Psi^*(f)\|_p\leq CA\|f\|_p$ for $p>1$.  Thus to prove 
Proposition 3 (1) it suffices to show  
\begin{equation} 
\|g(f)\|_p \leq CAB^{2/p}\|f\|_p  
\quad (p\in (1+\theta,2]),  
\end{equation}   
where $A$ is as above and $B=B_{q2}$.  
By a well-known property of Rademacher's functions,  (2.13) follows from  
\begin{equation} 
 \left \|U_\epsilon(f) \right \|_p \leq CA B^{2/p}
\|f\|_p \quad (p\in (1+\theta,2]), 
\end{equation} 
where $ U_\epsilon(f)(x,z) = \sum \epsilon_k \nu_k * f(x,z)$  
with $\epsilon = \{\epsilon_k \}$, $\epsilon_k = 1$ or $-1$  
(the inequality is uniform in $\epsilon$). 
\par
We define two sequences $\{r_m\}_1^{\infty}$ 
and $\{p_m \}_1^{\infty}$ by $p_1 = 2$ and
$$\frac{1}{r_m} - \frac 12 = \frac{1}{2p_m}, \quad \frac{1}{p_{m+1}} 
= \frac{\theta}{2} + \frac{1 - \theta}{r_m} \qquad \text{for $m \geq 1$.}$$
Then,  we have
$$\frac{1}{p_{m+1}} = \frac{1}{2} + \frac{1 - \theta}{2p_m}
 \qquad \text{for $m \geq 1$}. $$
 Thus   $1/p_m=(1-\eta^m)/(1+\theta)$,  where $\eta=(1-\theta)/2$, so 
$\{p_m\}$ is decreasing and converges to $1 + \theta$. 
\par 
   For $j \geq 1$ we prove   
\begin{equation} 
\left \|U_{\epsilon}(f)\right \|_{p_j} 
\leq C_jAB^{2/p_j}\left \|f\right \|_{p_j}.   
\end{equation} 
 To prove (2.15) we use the Littlewood--Paley theory. 
Let $\{\psi_k\}_{- \infty}^{\infty}$ be a sequence 
of non-negative functions in
 $C^{\infty}((0, \infty))$  such that 
 \begin{gather*}
 \text{supp} (\psi_k) \subset [\beta^{-k-1}, \beta^{-k+1}], \quad
\sum_k \psi_k(t)^2 = 1, 
\\
|(d/dt)^j\psi_k (t)| \leq c_j/t^j \quad (j=1,2, \dots), 
\end{gather*}    
where $c_j$ is independent of $\beta\geq 2$.   
Define $S_k$ by
$$\left(S_k(f)\right)\hat{\phantom{f}}(\xi,\eta) = 
 \psi_k(s(\xi))\hat{f}(\xi,\eta). $$
We write $U_\epsilon(f)=\sum_{j=-\infty}^\infty U_j(f)$, where 
$U_j(f) =  \sum_{k=-\infty}^{\infty}\epsilon_k S_{j+k}\left(\nu_k 
* S_{j+k}(f)\right)$.  
Then by Plancherel's theorem and (2.12) we have
\begin{align}
\left\|U_j(f) \right\|_2^2 
&\leq \sum_k C\iint_{D(j + k)\times \Bbb R^m} 
|\hat{\nu}_k(\xi,\eta)|^2|\hat{f}(\xi,\eta)|^2 
\, d\xi\,d\eta 
\\
&\leq CA^2\min\left(1,\beta^{-2(|j|-1-d)\alpha}\right) \sum_k  
\iint_{D(j + k)\times \Bbb R^m} |\hat{f}(\xi,\eta)|^2 \, d\xi\,d\eta \notag 
\\
&\leq CA^2\min\left(1,\beta^{-2(|j|-1-d)\alpha}\right)\|f\|^2_2,  \notag 
\end{align}  
where $D(k) = \{\xi\in \Bbb R^n: \beta^{-k-1} < s(\xi) 
\leq \beta^{-k+1}\}$.  
By (2.16) we have 
\begin{align}  
\left\|U_{\epsilon}(f) \right\|_2 &\leq \sum_{-\infty}^{\infty}\|U_j(f)\|_2 
\leq C\sum_{-\infty}^{\infty} 
A\min\left(1,\beta^{-(|j|-1-d)\alpha}\right)\|f\|_2    
\\
& \leq CA(1 - \beta^{- \alpha})^{-1}\|f\|_2.    \notag 
\end{align}  
If we denote by $A(m)$ the estimate of (2.15) for $j=m$, this  proves $A(1)$.  
\par 
Now,  we assume $A(m)$ and derive $A(m+1)$ from $A(m)$.  Note that
$$ \nu^*(f) \leq \mu^*(|f|) + \Psi^*(|f|) 
\leq g(|f|)(x) + 2\Psi^*(|f|), $$
where $\nu^*(f)(x) = \sup_k ||\nu_k|*f(x)|$.   Since 
$\|g(f)\|_{p_m} \leq CAB^{2/p_m}\|f\|_{p_m}$ by $A(m)$, we have
$$\|\nu^*(f)\|_{p_m} \leq CAB^{2/p_m}\|f\|_{p_m}.$$  
Also,  $\|\nu_k\|\leq CA$ by (2.1).  
Thus, by the proof of Lemma for Theorem B in \cite[p. 544]{DR}, we have 
the vector valued inequality:
\begin{align}  
\left\|\left(\sum |\nu_k*g_k|^2\right)^{1/2}\right\|_{r_m} 
&\leq C(AB^{2/p_m}\sup_k\|\nu_k\|)^{1/2}
\left\|\left(\sum |g_k|^2\right)^{1/2}\right\|_{r_m}   
\\ 
&\leq CAB^{1/p_m}\left\|\left(\sum |g_k|^2\right)^{1/2}\right\|_{r_m}. \notag 
\end{align} 
By (2.18) and the Littlewood--Paley inequality, we have
\begin{align} 
\|U_j(f)\|_{r_m} 
&\leq C\left\| \left(\sum_k |\nu_k*S_{j+k}(f)|^2\right)^{1/2}\right\|_{r_m} 
\\
 &\leq CAB^{1/p_m}\|f\|_{r_m}.   \notag 
\end{align}    
Here we note that the bounds for the Littlewood-Paley inequality are 
independent of $\beta\geq 2$. 
Interpolating between (2.16) and (2.19), we have
$$\|U_j(f)\|_{p_{m+1}} 
\leq CAB^{(1 - \theta)/p_m}\min\left(1,\beta^{-\theta \alpha(|j|-1-d)}\right)
\|f\|_{p_{m+1}}.$$
Thus
\begin{align*}  
\|U_{\epsilon}(f)\|_{p_{m+1}} &\leq \sum_j \|U_j(f)\|_{p_{m+1}}    
\leq CAB^{(1 - \theta)/p_m}(1-\beta^{-\theta \alpha})^{-1}\|f\|_{p_{m+1}}
\\ 
&\leq CAB^{2/p_{m+1}}\|f\|_{p_{m+1}}, 
\end{align*}        
which proves $A(m + 1)$.  By induction, this completes the proof of (2.15). 
\par  
Now we prove (2.14). Let $p \in (1+\theta, 2]$  and let $\{p_m\}_1^{\infty}$ 
be as in (2.15).  Then we have $p_{N+1} < p \leq p_N$ 
for some $N$. 
By interpolation between the estimates in (2.15) for $j=N$ and $j=N+1$ 
we have (2.14).   This completes the proof of  Proposition 3 (1). 
\par 
Part (2) of Proposition 3 can be proved in the same way. 
 We  take $A=(\log \beta)\|\Omega\|_q\|h\|_{\Delta_s}$ and  
 $\alpha=\epsilon_0/(q's')$ in (2.12).   
 Then, since 
$$\|\Psi^*(f)\|_p\leq C(\log \beta)\|\Omega\|_1\|h\|_{\Delta_1}\|f\|_p 
\qquad \text{for  $p>1$}$$ 
 if $\Gamma\equiv 0$,  the proof of part (1) can be used to get  
 (2.13) with $A=(\log \beta)\|\Omega\|_q\|h\|_{\Delta_s}$ as above 
 and  $B=B_{qs}$, and the conclusion of part (2) follows from (2.13). 
\end{proof} 
\par 
Proof of Proposition $1$.    
To prove Proposition 1 we may assume $1<s<2$. As in \cite{A-SP}, 
here we apply an idea in  the proof of  \cite[Theorem 7.5]{FP}.  
We consider measures $\tau_k$  defined by 
$$\hat{\tau}_k(\xi,\eta)=\int_{E_k}e^{-2\pi i\langle y,\xi\rangle} 
e^{-2\pi i\langle\Gamma(r(y)),\eta\rangle} 
|h(r(y))|^{2-s} |\Omega(y')|r(y)^{-\gamma}\, dy.$$  
Then,  the Schwarz inequality implies 
\begin{equation} 
|\sigma_k*f|^2\leq C(\log\beta)\|h\|_{\Delta_s}^s\|\Omega\|_1\tau_k*|f|^2. 
\end{equation}  
Define measures $\lambda_k$ by 
$$\hat{\lambda}_k(\xi,\eta)=\int_{E_k}e^{-2\pi i\langle y,\xi\rangle} 
e^{-2\pi i\langle\Gamma(r(y)),\eta\rangle} |\Omega(y')|r(y)^{-\gamma}\, dy.$$  
Since $|h|^{2-s}\in \Delta_{s/(2-s)}$ and 
$\||h|^{2-s}\|_{\Delta_{s/(2-s)}}=\|h\|_{\Delta_s}^{2-s}$, if  
$u=s/(2-s)$ by H\"{o}lder's inequality we have  
$$
|\tau_k*f|\leq C(\log \beta)^{1/u}\|h\|_{\Delta_s}^{2-s}\|\Omega\|_1^{1/u}
(\lambda_k*|f|^{u'})^{1/u'}.   
 $$  
Therefore, if $1+\theta<r/u'=2r(s-1)/s$,  by applying 
 (1) of Proposition 3 to $\{\lambda_k\}$ we see that 
\begin{equation} 
\|\tau^*(f)\|_r\leq C(\log\beta)\|h\|_{\Delta_s}^{2-s}\|\Omega\|_q 
B_{q2}^{2/r}\|f\|_r,  
\end{equation}  
where $\tau^*(f)=\sup_k|\tau_k*f|$. 
Thus, if $|1/v-1/2|=1/(2r)<1/(s'(1+\theta))$, 
using (2.20), (2.21) and arguing as in the proof of 
Lemma for Theorem B in \cite[p. 544] {DR}, we see that 
\begin{equation} 
\left\|\left(\sum |\sigma_k*g_k|^2\right)^{1/2}\right\|_{v} 
\leq C(\log\beta)\|h\|_{\Delta_s}\|\Omega\|_q B_{q2}^{1/r}
\left\|\left(\sum |g_k|^2\right)^{1/2}\right\|_{v}. 
\end{equation}  
\par 
We decompose 
$Tf=\sum_{j=-\infty}^{\infty}V_jf$, where $V_jf
=\sum_{k=-\infty}^{\infty}S_{j+k}\left(\sigma_k * S_{j+k}(f)\right)$.   
Then,  using (2.22) and the Littlewood--Paley theory, we see that  
\begin{equation} 
\|V_jf\|_v\leq C(\log\beta)\|h\|_{\Delta_s}\|\Omega\|_q
 B_{q2}^{1/r}\|f\|_v,  
 \end{equation} 
where $|1/v-1/2|=1/(2r)<1/(s'(1+\theta))$. On the other hand, 
by (2.1)--(2.3) we have 
$$\left|\hat{\sigma}_k(\xi,\eta) \right| \leq 
C(\log \beta)\|\Omega\|_q\|h\|_{\Delta_s} 
\min \left(1, (\beta^{k+d}s(\xi))^{\kappa}, 
(\beta^{k} s(\xi))^{-\kappa} \right),    $$ 
where $\kappa=\epsilon_0/(q's')$, and hence,  similarly to the proof of   
(2.16), we can show that  
\begin{equation} 
\|V_jf\|_2\leq C(\log\beta)\|h\|_{\Delta_s}\|\Omega\|_q 
\min\left(1,\beta^{-(|j|-1-d)\kappa}\right)\|f\|_2.    
\end{equation}   
If $|1/p-1/2|<(1-\theta)/(s'(1+\theta))$, then we can find 
 numbers $v$ and $r$ such that 
$|1/v-1/2|=1/(2r)<1/(s'(1+\theta))$ and  $1/p=\theta/2+(1-\theta)/v$. 
Thus, interpolating between (2.23) and (2.24), 
we have 
$$\|V_jf\|_p\leq C(\log\beta)\|h\|_{\Delta_s}\|\Omega\|_q B_{q2}^{(1-\theta)/r}
\min\left(1,\beta^{-\theta(|j|-1-d)\kappa}\right)\|f\|_p. $$  
Therefore  
\begin{equation} 
\|Tf\|_p\leq \sum_j\|V_jf\|_p\leq 
C(\log\beta)\|h\|_{\Delta_s}\|\Omega\|_q B_{q2}^{(1-\theta)/r}B_{qs}\|f\|_p. 
\end{equation}   
This completes the proof of Proposition 1, since  $(1-\theta)/r=|1/p-1/p'|$.  
\par 
Proof of Proposition $2$. 
The $L^2$ estimates follow from Proposition 1, so 
 on account of duality and 
interpolation  we may assume that $1+\theta < p \leq 4/(3-\theta)$. 
For  $p_0 \in (1+\theta, 4/(3-\theta)]$ we can find $r\in (1+\theta,2]$ 
such that $1/p_0=1/2+(1-\theta)/(2r)$.  
If $\Gamma\equiv 0$,  by (2) of Proposition 3 and (2.1), arguing as in (2.18), 
 we have (2.22) 
with $B_{q2}$ replaced by $B_{qs}$  for the number  
$v$ satisfying  $1/v-1/2=1/(2r)$ (note that $1/p_0=\theta/2+(1-\theta)/v$).  
Thus, arguing as in the proof of 
Proposition 1, we have (2.25) with $p=p_0$ and $B_{qs}$ in place of $B_{q2}$.  
This completes the proof of Proposition 2. 
\par  
Now we can give  proofs of Theorems $1$ and $3$.   
To prove Theorem 1, we may assume that $1<s\leq 2$. 
Let  $\beta=2^{q'}$ in Proposition 1.  
Then, since $\theta$  is  an arbitrary number in $(0,1)$,  
we have Theorem 1 for $s\in (1,2]$. 
\par 
Next, take  
$\beta=2^{q's'}$ in Proposition 2.  Then, we have 
$$\|Tf\|_p \leq C(q - 1)^{-1}(s - 1)^{-1}\|\Omega\|_q\|h\|_{\Delta_s}\|f\|_p $$ for $p \in (1, \infty)$, since $(1+\theta, (1+\theta)/\theta)\to (1,\infty)$ 
as $\theta\to 0$.   
From this the result for  $S$ in Theorem 3 
follows if we take functions of the form 
$f(x,z)=k(x)g(z)$.

\section{Extrapolation}  

We can prove Theorems 2 and 4 by an extrapolation method  similar to the 
one used in \cite{S}.  
We give a proof of Theorem 4 for the sake of completeness (Theorem 2 can be 
proved in the same way).  
We fix $p\in(1,\infty)$ and $f$ with $\|f\|_p\leq 1$.  
Let $S$ be as in (1.2). 
We also write $Sf=S_{h,\Omega}(f)$.  
Put $U(h,\Omega) = \|S_{h,\Omega}(f)\|_p$.  
Then we see that 
\begin{equation}\begin{split} 
U(h, \Omega_1+\Omega_2) &\leq U(h, \Omega_1) + U(h, \Omega_1),  
 \\ 
 U(h_1 + h_2, \Omega) &\leq U(h_1, \Omega) + U(h_2, \Omega),  
 \end{split}\end{equation} 
 for appropriate functions $\Omega, h, \Omega_1, \Omega_2, h_1$ and $h_2$. 
Set 
\begin {align*}
E_1 &= \{r \in \Bbb R_+ : |h(r)| \leq 2 \}, 
\\ 
E_m &= \{r \in \Bbb R_+ : 2^{m-1} < |h(r)| \leq 2^{m}\} \qquad   
\text{for $m \geq 2$}. 
\end{align*}  
  Then $h=\sum_{m=1}^\infty h\chi_{E_m}$.  
Put $e_m=\sigma(F_m)$ for $m\geq 1$, where 
\begin{align*}
F_m&=\{\theta\in \Sigma: 
2^{m-1}<|\Omega(\theta)|\leq 2^m\} \qquad \text{for $m\geq 2$}, 
\\ 
F_1&=\{\theta\in \Sigma: |\Omega(\theta)|\leq 2\}. 
\end{align*}  
Let $\Omega_m=\Omega\chi_{F_m}-\sigma(\Sigma)^{-1}\int_{F_m}\Omega\,d\sigma$. 
Then $\Omega=\sum_{m=1}^\infty\Omega_m$. 
Note that $\int_\Sigma \Omega_m\,d\sigma =0$. 
Applying Theorem 3, we see that  
\begin{equation} 
U\left(h\chi_{E_m},\Omega_j\right) 
\leq C(q - 1)^{-1}(s - 1)^{-1}\|h\chi_{E_m}\|_{\Delta_s}\|\Omega_j\|_{q} 
\end{equation} 
for all $s, q \in (1, 2]$. 
\par 
Now we follow the extrapolation argument of A.~Zygmund 
\cite[Chap. XII, pp. 119--120]{Z}.  For $k\in \Bbb Z$,  put 
\begin{align*}
E(k, m) &= \{r \in (2^{k}, 2^{k+1}] : 2^{m-1} < |h(r)| \leq  2^{m}\} 
\qquad \text{for $m\geq 2$},  
\\ 
E(k, 1) &= \{r \in (2^{k}, 2^{k+1}] : 0<|h(r)| \leq  2 \}. 
\end{align*}  
  Then  
\begin{align*} 
\int_{E(k,m)}|h(r)|^{(m+1)/m} dr/r &\leq Cm^{-a}\int_{E(k,m)}
|h(r)|\left(\log(2+|h(r)|)\right)^{a}\, dr/r 
\\ 
&\leq Cm^{-a}L_{a}(h),
\end{align*}  
and hence  
\begin{equation} 
\|h\chi_{E_m}\|_{\Delta_{1+1/m}} \leq Cm^{-am/(m+1)}L_{a}(h)^{m/(m+1)} 
\end{equation} 
for $ m \geq 1$.  Also we have 
\begin{equation} 
\|\Omega_j\|_{1+1/j}\leq C2^j e_j^{j/(j+1)}. 
\end{equation}  
From  (3.1)--(3.4) we see that
\begin{align*}  
U(h,\Omega)&\leq \sum_{m \geq 1}\sum_{j \geq 1} U\left(h\chi_{E_m}, 
\Omega_j\right) 
\leq C\sum_{m \geq 1}\sum_{j \geq 1} 
 jm\|h\chi_{E_m}\|_{\Delta_{1+1/m}}\|\Omega_j\|_{1+1/j} \\
&\leq C(1+L_a(h))\sum_{m \geq 1} \sum_{j \geq 1}
m^{1-am/(m+1)}j2^j e_j^{j/(j+1)}
\\
&= C(1+L_a(h))\left(\sum_{m \geq 1} m^{1-am/(m+1)}\right)\left(\sum_{j \geq 1}
j2^j e_j^{j/(j+1)}\right).  
\end{align*}   
When $a>2$, it is easy to see that $\sum_{m \geq 1} m^{1-am/(m+1)}<\infty$. 
Also, we have 
\begin{align*} 
 \sum_{j\geq 1}j 2^j e_j^{j/(j+1)} 
&= \sum_{e_j < 3^{-j}} + \sum_{e_j \geq 3^{-j}}    
\\
&\leq \sum_{j \geq 1}j2^j3^{-j^2/(j+1)}
 + \sum_{j \geq 1}j2^j e_j3^{j/(j+1)}
 \\
&\leq C+C\int_{\Sigma}|\Omega(\theta)|
\log(2+|\Omega(\theta)|)\, d\sigma(\theta).
\end{align*}     
Collecting the results, we conclude  the proof of Theorem 4.     

\begin{remarkz} 
For a positive number $a$ and a function $h$ on $\Bbb R_+$,  let  
$$
 N_a(h)  = \sum_{m \geq 1}m^a 2^md_m(h), $$
where 
 $d_m(h) = \sup_{k \in \Bbb Z} 2^{-k}|E(k,m)|$ ($E(k,m)$ is as above). 
We define a class $\mathscr N_a$ to be the space of all measurable functions 
$h$ on $\Bbb R_+$ which satisfy  $N_a(h)<\infty$.  
Then, it can be shown that if $h\in \mathscr L_{a}$ for some $a > 2$, 
then $h\in \mathscr N_1$.  By a method similar to that used in this section, 
we can show the $L^p$ boundedness of $S$ in Theorem 4 under a less restrictive 
condition that $h\in \mathscr N_1$ and $\Omega\in L\log L$ (see \cite{S}).  
\end{remarkz}   

\section{An estimate for  a trigonometric integral} 

Let $A$ be an $n\times n$ real matrix and 
$$\phi_A(t)=(t-\gamma_1)^{m_1}(t-\gamma_2)^{m_2}\dots (t-\gamma_k)^{m_k} $$
be the minimal polynomial of $A$, where $\gamma_i\neq\gamma_j$ if $i\neq j$.  
 Let $a_i(t)=(t-\gamma_i)^{m_i}$ for $i=1,2,\dots ,k$.  
Then, we can find polynomials $b_i(t)$ $(i=1,2,\dots,k)$ such that 
$$\frac{1}{\phi_A(t)}=\sum_{i=1}^k\frac{b_i(t)}{a_i(t)}. $$ 
For each $i$, $1\leq i\leq k$, let  $P_i$  be the polynomial defined by 
$$P_i(t)=\frac{b_i(t)}{a_i(t)}\phi_A(t).   $$ 
 We consider the $n\times n$ matrices $P_i(A)$, which are defined as usual 
(see \cite{FIS}).   
\par  
 Let 
$$
V_i= \{z\in \Bbb C^n : (A-\gamma_i E)^{m_i}z=0\} \quad 
(i=1,2,\dots , k),  $$  
where  $E$ denotes the unit matrix.   
Then, the vector space $\Bbb C^n$ can be decomposed into a direct sum  as   
$$\Bbb C^n=V_1\oplus V_2\oplus\dots \oplus  V_{k}. $$  
Each of the matrices $P_i(A)$ is the projection onto $V_i$; indeed, 
we have the following (see \cite{FIS}): 
$P_i(A)z\in V_i$ for all $z\in \Bbb C^n$, for $i=1,2,\dots, k$,  and 
\begin{gather*} 
P_1(A)+P_2(A)+\dots +P_{k}(A)=E, 
\\ 
P_i^2(A)=P_i(A), \quad P_i(A)P_j(A)=0 \quad\text{if $i\neq j$}\quad 
(1\leq i, j\leq k). 
\end{gather*} 
\par 
 For $z=(z_i)$ and $w=(w_i)$ in $\Bbb C^n$, we write $\langle z,w\rangle= 
\sum_{i=1}^nz_iw_i$.  
Let 
\begin{equation}
J(A,\eta,\zeta)=
\sum_{i=1}^{k}\sum_{j=0}^{m_i-1}\left|\langle (A-\gamma_i E)^jP_i(A)\eta, 
A^*\zeta\rangle\right|   \label{c1.1}
\end{equation} 
for $\eta, \zeta\in \Bbb R^n$.  
 In this section, we prove the following: 
 \begin{theorem}     
  Let  $\eta, \zeta\in \Bbb R^n\setminus\{0\}$ and $0<a<b$.  
  Suppose that $J(A,\eta,\zeta)\neq 0$ and the numbers $a$, $b$  are in a 
  fixed compact  subinterval of $(0,\infty)$. 
Then, we have     
$$ \left|\int_a^b \exp\left(i\langle t^A\eta,\zeta\rangle\right)
\, dt\right| 
\leq CJ(A,\eta,\zeta)^{-1/N},   $$ 
where  $N=\deg \phi_A =m_1+m_2+\dots+m_k$ and 
the constant $C$ is independent of $\eta$, $\zeta$,  $a$ and $b$. 
\end{theorem}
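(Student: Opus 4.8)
The plan is to reduce the oscillatory estimate to a scalar van der Corput situation by exploiting the spectral decomposition $\Bbb C^n = V_1\oplus\dots\oplus V_k$ introduced above, and then to run an induction on $N=\deg\phi_A$. Write $\eta = \sum_i P_i(A)\eta$ and set $\eta_i = P_i(A)\eta\in V_i$. Since $t^A$ commutes with each projection $P_i(A)$ and preserves $V_i$, we have $\langle t^A\eta,\zeta\rangle = \sum_{i=1}^k\langle t^A\eta_i,\zeta\rangle$, so it is enough to understand each summand. On $V_i$ the matrix $A$ acts as $\gamma_i E + (A-\gamma_i E)$ with $(A-\gamma_i E)^{m_i}=0$ on $V_i$, hence $t^A\eta_i = t^{\gamma_i}\sum_{j=0}^{m_i-1}\frac{(\log t)^j}{j!}(A-\gamma_i E)^j\eta_i$. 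Therefore $\langle t^A\eta_i,\zeta\rangle$ is a finite sum of terms of the form $t^{\gamma_i}(\log t)^j c_{ij}$ with $c_{ij}=\langle (A-\gamma_i E)^jP_i(A)\eta,\zeta\rangle$ (here I use that $(A-\gamma_i E)^j P_i(A)$ is self-adjointly paired correctly; if $\gamma_i$ is non-real one pairs $V_i$ with $V_{\bar i}$, but one can equally work with the real Jordan form). The quantity $J(A,\eta,\zeta)$ in \eqref{c1.1} is, up to the inner factor $A^*$ (equivalently up to replacing $\zeta$ by $A^*\zeta$, harmless since $A$ is fixed and invertible on each eigenspace for the purpose of a lower bound — if $\gamma_i=0$ occurs one handles that block separately, where $A$ is nilpotent and the phase is a pure polynomial in $\log t$), exactly the sum of the moduli of the coefficients governing $\frac{d}{dt}\langle t^A\eta,\zeta\rangle$.

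The key step is then a van der Corput / sublevel-set estimate for the real phase $\Phi(t)=\langle t^A\eta,\zeta\rangle$. The derivative $\Phi'(t)=\langle A t^{A-E}\eta,\zeta\rangle = t^{-1}\langle (\log t)\text{-polynomial combination of }(A-\gamma_i E)^jP_i(A)\eta,\, A^*\zeta\rangle$; after the substitution $t=e^u$ this becomes a "generalized polynomial" $\sum_i e^{\gamma_i u} q_i(u)$ in $u$ with $q_i$ a polynomial of degree $\le m_i-1$, and the total number of parameters is $N$. Such a function is a solution of a linear ODE of order $N$ with constant coefficients, so by the classical fact (a Chebyshev-system / Wronskian argument, or the standard induction in van der Corput's lemma) it vanishes at most $N-1$ times; equivalently, one of its derivatives of order $\le N-1$ is bounded below in modulus by a constant multiple of $J(A,\eta,\zeta)$ on the fixed compact $u$-interval. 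Applying van der Corput's lemma (in its form: if $|\psi^{(\ell)}|\ge\lambda$ on an interval of bounded length then $|\int e^{i\psi}|\le C_\ell\lambda^{-1/\ell}$) with $\ell\le N$ gives the bound $CJ(A,\eta,\zeta)^{-1/N}$. The normalization by $J$ is exactly what makes the estimate scale-invariant in $(\eta,\zeta)$: both $\Phi$ and $J$ are linear in $(\eta,\zeta)$, so we may reduce to $J(A,\eta,\zeta)=1$ and prove a uniform $O(1)$ bound on the oscillatory integral, which is the induction hypothesis.

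I would organize the induction on $N$ as follows. For $N=1$, $A$ is $1\times 1$ (or acts as a single scalar $\gamma_1$ on each relevant block), $\Phi(t)=ct^{\gamma_1}$ with $|c|=J$, and the estimate is immediate. For the inductive step, after reducing to $J=1$, consider $\Phi'$; writing $\Phi'(t) = t^{-1}\Psi(t)$ where $\Psi$ is again of the same generalized-polynomial type but with parameters controlled, split the interval $[a,b]$ into the set where $|\Psi|\ge\delta$ and its complement. On the bad set, which by the Chebyshev bound on the number of sign changes of $\Psi$ is a union of $O(N)$ intervals of total length $O(\delta^{1/?})$ — more precisely one uses that on the set where a derivative of $\Psi$ of appropriate order is large, $\Psi$ itself is small only on a set of measure $O(\delta^{1/(N-1)})$ — one estimates the integral trivially by its length; on the good set one integrates by parts once, $\int e^{i\Phi}dt = \int \frac{1}{i\Phi'}\frac{d}{dt}(e^{i\Phi})dt$, and controls the boundary terms and the variation of $1/\Phi'$ using monotonicity on the subintervals (again $O(N)$ of them). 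Optimizing the split in $\delta$ produces the exponent $1/N$. The main obstacle is precisely this bookkeeping: getting a clean, parameter-uniform bound on the number of zeros of the generalized polynomial $\Psi$ (equivalently controlling the sublevel sets $\{|\Psi|<\delta\}$) and on the total variation of $1/\Phi'$, uniformly in $(\eta,\zeta)$ once $J$ is normalized and uniformly as $a,b$ range over a fixed compact subinterval of $(0,\infty)$ — the compactness assumption is what keeps the factors $t^{\gamma_i}$, $\log t$ and their reciprocals bounded above and below, so that all the implied constants depend only on $A$ and the chosen interval.
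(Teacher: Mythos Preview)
Your overall architecture matches the paper's: substitute $t=e^u$, recognize that the derivative of the phase lies in the solution space of a constant-coefficient linear ODE of order $N$ (spanned by $u^j e^{\gamma_i u}$, $0\le j\le m_i-1$), use that nontrivial solutions have boundedly many zeros on a compact interval, and then apply van der Corput on finitely many subintervals. The paper packages this cleanly as a separate Proposition: if $\varphi'=\sum d_\ell\varphi_\ell$ with $\{\varphi_\ell\}$ a basis for the ODE solution space, then $\bigl|\int_\alpha^\beta e^{i\varphi}\bigr|\le C(\sum|d_\ell|)^{-1/k}$. Applied with $\varphi(u)=\langle e^{uA}\eta,\zeta\rangle$, one has $\varphi'(u)=\langle e^{uA}\eta,A^*\zeta\rangle$, so the coefficients are exactly the $c_{ij}=\frac{1}{j!}\langle (A-\gamma_iE)^jP_i(A)\eta,A^*\zeta\rangle$, whose moduli sum is $\approx J(A,\eta,\zeta)$. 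This also explains why $A^*\zeta$, not $\zeta$, appears in $J$: it comes from differentiating the phase, not from any invertibility consideration, so your aside about ``$A$ invertible on each eigenspace'' and special handling of $\gamma_i=0$ is unnecessary.

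There is, however, a genuine gap in your justification of the key quantitative step. You assert that the bounded-zeros property is ``equivalent'' to the statement that some derivative of order $\le N-1$ is bounded below by $C\cdot J$ on the compact interval, and separately you propose to ``reduce to $J(A,\eta,\zeta)=1$ and prove a uniform $O(1)$ bound'' by scaling. The scaling reduction is simply wrong: $\Phi$ and $J$ are both linear in $(\eta,\zeta)$, but the oscillatory integral $\int e^{i\lambda\Phi}$ is not $\lambda^{-1/N}\int e^{i\Phi}$, so normalizing to $J=1$ yields only the trivial bound $|\int e^{i\Phi}|\le b-a$, with no decay. What is actually needed (and what the paper does) is a compactness argument on the coefficient sphere: the map $(u,d_1,\dots,d_N)\mapsto \sum_{\ell=0}^{N-1}\bigl|\psi^{(\ell)}(u)\bigr|$, with $\psi=\sum d_\ell\varphi_\ell$, is continuous and strictly positive on $[\alpha,\beta]\times\{\sum|d_\ell|=1\}$ (positivity uses uniqueness for the ODE, not merely the zero count), hence has a positive minimum $C_0$; this gives $\min_u\sum_\ell|\varphi^{(\ell)}(u)|\ge C_0\sum|d_\ell|$ by homogeneity. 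The bounded-zeros lemma then enters only afterwards, to chop $[\alpha,\beta]$ into $O(1)$ subintervals on each of which a single derivative index dominates and $\varphi'$ is monotone, so that van der Corput applies. Your alternative induction-in-$N$ scheme with a $\delta$-threshold is a legitimate different route (closer to the original Stein--Wainger treatment), but as you note, making it uniform in the coefficients requires the same kind of lower bound, so it does not sidestep the issue.
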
   
Since $\sum_{i=1}^kP_i(A)=E$, using the triangle inequality, we see that 
$$\left|\langle \eta, A^*\zeta\rangle\right|\leq  \sum_{i=1}^{k}
\left|\langle P_i(A)\eta, A^*\zeta\rangle\right|\leq J(A,\eta,\zeta). $$   
Therefore,  Theorem 5 implies the following:   
  \begin{corollaryz} 
  Let $\eta, \zeta, a, b$ and $N$ be as in Theorem $5$. Then, we have 
  $$ \left|\int_a^b \exp\left(i\langle t^A\eta,\zeta\rangle\right)
\, dt\right| 
\leq C\left|\langle A\eta, \zeta\rangle\right|^{-1/N}$$ 
when $\langle A\eta, \zeta\rangle\neq 0$.  
\end{corollaryz}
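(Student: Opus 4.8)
The plan is to derive the corollary immediately from Theorem 5, using only the pointwise lower bound for $J(A,\eta,\zeta)$ that is recorded just above the corollary's statement. First I would note that, since $A$ is a real matrix, $\langle A\eta,\zeta\rangle=\langle\eta,A^*\zeta\rangle$; hence the displayed inequality
$$\left|\langle \eta, A^*\zeta\rangle\right|\leq J(A,\eta,\zeta),$$
obtained there from $\sum_{i=1}^{k}P_i(A)=E$ and the triangle inequality applied to the $j=0$ terms in the definition of $J(A,\eta,\zeta)$, is exactly $\left|\langle A\eta,\zeta\rangle\right|\leq J(A,\eta,\zeta)$. In particular the hypothesis $\langle A\eta,\zeta\rangle\neq 0$ of the corollary forces $J(A,\eta,\zeta)>0$, so the hypothesis $J(A,\eta,\zeta)\neq 0$ required by Theorem 5 is automatically satisfied; the remaining hypotheses (that $\eta,\zeta\in\Bbb R^n\setminus\{0\}$ and that $a,b$ lie in a fixed compact subinterval of $(0,\infty)$) coincide with those of Theorem 5.

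Granting this, the estimate is a two-step chain. Theorem 5 gives
$$\left|\int_a^b \exp\left(i\langle t^A\eta,\zeta\rangle\right)\,dt\right|\leq C\,J(A,\eta,\zeta)^{-1/N},$$
and then $J(A,\eta,\zeta)\geq\left|\langle A\eta,\zeta\rangle\right|>0$ together with the monotonicity of $t\mapsto t^{-1/N}$ on $(0,\infty)$ yields $J(A,\eta,\zeta)^{-1/N}\leq\left|\langle A\eta,\zeta\rangle\right|^{-1/N}$. Combining the two displays produces the claimed bound, with the same constant $C$ as in Theorem 5, which is independent of $\eta$, $\zeta$, $a$ and $b$.

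There is no real obstacle here: the entire content is the pointwise comparison $J(A,\eta,\zeta)\geq\left|\langle A\eta,\zeta\rangle\right|$, which the paper has already isolated, supplemented by the remark that this same comparison also guarantees the non-vanishing hypothesis of Theorem 5 under the weaker assumption $\langle A\eta,\zeta\rangle\neq 0$. Once these two points are made the argument is complete in two or three lines.
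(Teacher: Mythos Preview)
Your proposal is correct and follows exactly the paper's own argument: the paper derives the corollary in one line from Theorem~5 via the inequality $\left|\langle \eta, A^*\zeta\rangle\right|\leq J(A,\eta,\zeta)$, and you have simply spelled out the two obvious consequences of that inequality (that it verifies the nonvanishing hypothesis of Theorem~5 and that it converts the bound $J(A,\eta,\zeta)^{-1/N}$ into $\left|\langle A\eta,\zeta\rangle\right|^{-1/N}$).
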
 
This is used to prove Lemma 2 in Section 2.   
\par 
We define the curve $X(t)=t^A\eta$ for a fixed 
$\eta\in\Bbb R^n\setminus\{0\}$. 
Then,  E.~M.~Stein and S.~Wainger \cite{SW2} proved the following (see 
\cite{NW, SW} for related results): 
\begin{theorema}
Suppose that the curve $X$  does not lie in an affine hyperplane.  
     Then    
$$ \left|\int_a^b \exp\left(i\langle X(t),\zeta\rangle\right)
\, dt\right| 
\leq C\left|\zeta\right|^{-1/n},   $$ 
where $C$ is independent of $\zeta\in \Bbb R^n\setminus\{0\};$ 
 furthermore, 
if  $a$ and $b$  are in a fixed compact subinterval of $(0,\infty)$, 
the constant $C$ is  also independent of $a$ and $b$.  
\end{theorema}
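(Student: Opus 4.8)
The plan is to derive Theorem A from Theorem 5, using the elementary fact that $N=\deg\phi_A\le n$ (the minimal polynomial divides the characteristic polynomial). The heart of the matter is to show that the non-degeneracy of the curve $X$ forces $J(A,\eta,\zeta)$ to be comparable to $|\zeta|$. To begin, I would dispose of small frequencies: for $|\zeta|\le 1$ one has $|\int_a^b\exp(i\langle X(t),\zeta\rangle)\,dt|\le b-a\le(b-a)|\zeta|^{-1/n}$, which is of the desired form with a constant depending only on the interval. So from now on assume $|\zeta|\ge 1$.

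The key step is to prove that $J(A,\eta,\zeta)\ge c|\zeta|$ for all $\zeta\in\Bbb R^n$, with $c=c(A,\eta)>0$. Set $v_{ij}=(A-\gamma_iE)^jP_i(A)\eta$ for $1\le i\le k$, $0\le j\le m_i-1$, so that $J(A,\eta,\zeta)=\sum_{i,j}|\langle v_{ij},A^*\zeta\rangle|$, and let $W$ be the span of the $v_{ij}$. First I would observe that $W$ contains $\eta$ and is $A$-invariant: $\eta=\sum_iP_i(A)\eta=\sum_iv_{i0}\in W$ since $\sum_iP_i(A)=E$, and $Av_{ij}=(A-\gamma_iE)v_{ij}+\gamma_iv_{ij}=v_{i,j+1}+\gamma_iv_{ij}$, where $v_{i,m_i}=(A-\gamma_iE)^{m_i}P_i(A)\eta=0$ because $P_i(A)\eta\in V_i$ and $(A-\gamma_iE)^{m_i}$ annihilates $V_i$; hence $AW\subset W$, so $A^\ell\eta\in W$ for every $\ell\ge 0$. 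Now suppose $J(A,\eta,\zeta)=0$, i.e. $\langle v,A^*\zeta\rangle=0$ for all $v\in W$. Taking $v=\eta,A\eta,\dots,A^{n-1}\eta$ and using $\langle A^\ell\eta,A^*\zeta\rangle=\langle A^{\ell+1}\eta,\zeta\rangle$ gives $\langle A^m\eta,\zeta\rangle=0$ for $m=1,\dots,n$; by the Cayley--Hamilton theorem and induction this persists for all $m\ge 1$, whence, expanding $t^A=\sum_m(\log t)^mA^m/m!$, $\langle X(t),\zeta\rangle=\langle t^A\eta,\zeta\rangle=\langle\eta,\zeta\rangle$ is constant in $t$. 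Thus $X$ lies in the affine hyperplane $\{x:\langle x,\zeta\rangle=\langle\eta,\zeta\rangle\}$, which by the hypothesis of Theorem A is impossible unless $\zeta=0$. Hence the linear map $\zeta\mapsto(\langle v_{ij},A^*\zeta\rangle)_{i,j}$ is injective, and since $\zeta\mapsto\sum_{i,j}|\langle v_{ij},A^*\zeta\rangle|$ is continuous and positively homogeneous of degree $1$, it is bounded below on $\Bbb R^n$ by $c|\zeta|$ with $c=\min_{|\zeta|=1}\sum_{i,j}|\langle v_{ij},A^*\zeta\rangle|>0$.

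Finally I would combine the two ingredients. Since $J(A,\eta,\zeta)\ne 0$ for $\zeta\ne 0$, Theorem 5 applies, and together with the lower bound it gives, for $|\zeta|\ge 1$,
$$|\int_a^b\exp(i\langle X(t),\zeta\rangle)\,dt|\le CJ(A,\eta,\zeta)^{-1/N}\le C(c|\zeta|)^{-1/N}\le C'|\zeta|^{-1/n},$$
the last step using $N\le n$ and $|\zeta|\ge 1$. Applying Theorem 5 with the fixed compact subinterval taken to be $[a,b]$ itself yields the bound with $C'$ independent of $\zeta$; applying it with a prescribed compact subinterval containing $[a,b]$ yields $C'$ independent of $a$ and $b$ as well. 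Together with the trivial case $|\zeta|\le 1$, this is Theorem A.

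I expect the injectivity step in the second paragraph to be the main obstacle: one must pass correctly from the geometric hypothesis (the curve misses every affine hyperplane) to the algebraic statement that $\langle A^m\eta,\zeta\rangle=0$ for $m=1,\dots,n$ implies $\zeta=0$, and verify that the vectors $(A-\gamma_iE)^jP_i(A)\eta$ span an $A$-invariant subspace through $\eta$ (so that their joint annihilator, after the substitution $\zeta\mapsto A^*\zeta$, is controlled by the span of $\eta,A\eta,\dots,A^{n-1}\eta$). The remaining ingredients are Theorem 5 of this section and the bound $N\le n$. (This route is, of course, close in spirit to the original van der Corput argument of Stein and Wainger, Theorem 5 playing the role of the quantitative oscillatory estimate.)
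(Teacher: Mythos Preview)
Your proof is correct and follows the paper's approach: show $J(A,\eta,\zeta)>0$ for every nonzero $\zeta$ from the non-degeneracy of $X$, use homogeneity and compactness on the unit sphere to get $J(A,\eta,\zeta)\geq c|\zeta|$, and then apply Theorem~5 together with $N\leq n$. The only difference is cosmetic---the paper reads off ``$J=0\Rightarrow\psi$ constant'' directly from the expansion \eqref{c1.2} of $t^A$, whereas you deduce it from the $A$-invariance of $W=\operatorname{span}\{v_{ij}\}$ and the power series for $t^A$ (incidentally, your appeal to Cayley--Hamilton is unnecessary, since $A^\ell\eta\in W$ for every $\ell\geq 0$ already follows from the invariance).
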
 
\par 
 Now, we see that Theorem 5 implies Theorem A.    
Since $P_i(A)z\in V_i$  ($z\in \Bbb C^n$), we have 
$(A-\gamma_i E)^m P_i(A)=0$ if $m\geq m_i$ ($i=1,2,\dots , k$). 
 Therefore 
 \begin{align*}     
 \exp((\log t)A)P_i(A)&=\exp((\log t)\gamma_iE)
 \exp((\log t)(A-\gamma_iE))P_i(A)                     
 \\
 &=t^{\gamma_i}\sum_{j=0}^{m_i-1}\frac{(\log t)^j}{j!}(A-\gamma_i E)^jP_i(A).  
 \end{align*}   
Thus, using $\sum_{i=1}^{k}P_i(A)=E$, we see that 
 \begin{equation} 
 t^A=\sum_{i=1}^{k}t^{\gamma_i}
 \left[\sum_{j=0}^{m_i-1}\frac{(\log t)^j}{j!}(A-\gamma_i E)^j\right]P_i(A).  
 \label{c1.2}
\end{equation}  
\par  
 The assumption on $X$ of Theorem A can be rephrased as follows:  
 the function $\psi(t)=\langle t^A\eta,\zeta\rangle$ is not a constant function on $(0,\infty)$ for every $\zeta\in \Bbb R^n\setminus\{0\}$. 
 If  $\psi(t)$ is not a constant function, 
then $\psi'(t)$ is not identically $0$. 
Thus, since  $t(d/dt)\psi(t)=\langle t^A\eta,A^*\zeta\rangle$, by \eqref{c1.2}
 we have $J(A,\eta,\zeta)>0$, where $J(A,\eta,\zeta)$ is as in \eqref{c1.1}. 
 Let $C_0=\min_{|\zeta|=1}J(A,\eta,\zeta)$ and note that 
 $C_0>0$.  Then, from  Theorem 5, it 
 follows that 
 $$ \left|\int_a^b \exp\left(i\langle X(t),\zeta\rangle\right)
\, dt\right| 
\leq CC_0^{-1/N}|\zeta|^{-1/N}.   $$ 
This implies  Theorem A, since $N\leq n$ 
(in fact, it is not difficult to see that $N=n$ if $X$ satisfies the 
assumption of Theorem A).  
\par 
In the following,  we  give a proof of  Theorem 5.   
Let $I=[\alpha,\beta]$ be a compact interval in $\Bbb R$.  Consider the 
differential equation 
\begin{equation} 
y^{(k)}+a_1y^{(k-1)}+a_2y^{(k-2)}+\dots +a_ky=0 \quad \text{on $I$},   
\label{c2.1}  
\end{equation}  
where $a_1, a_2, \dots , a_k$ are complex constants.  
 Let $\{\varphi_1, \varphi_2, \dots , \varphi_k\}$ be a basis for the space 
$S$ of all solutions of \eqref{c2.1}.  Then, we use the following  to prove 
 Theorem 5.   
\begin{proposition} 
  Let $\varphi$ be a real valued 
function such that  $\varphi'\in S$.  Suppose that  
$\varphi'=d_1\varphi_1+d_2\varphi_2+\dots + d_k\varphi_k$, 
where $d_1, d_2, \dots, d_k$ 
are complex constants, which are uniquely determined by $\varphi'$.  
  Then, we have 
$$\left|\int_\alpha^\beta e^{i\varphi(t)} \, dt\right|\leq 
C\left(|d_1|+|d_2|+\dots +|d_k|\right)^{-1/k},$$  
where $C$ is independent of $\varphi;$ also the constant $C$ 
 is  independent 
of $\alpha, \beta$ if they are within a fixed finite interval of $\Bbb R$. 
\end{proposition}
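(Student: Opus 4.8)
The plan is to prove this by induction on $k$, the order of the constant-coefficient ODE, using van der Corput's lemma as the base case and a rescaling argument together with the structure of the solution space $S$ for the inductive step. The idea is that a basis $\{\varphi_1,\dots,\varphi_k\}$ for $S$ consists of functions of the form $t^j e^{\lambda t}$ (real and imaginary parts thereof), so if $\varphi' = \sum d_i\varphi_i$ has large coefficients, then on the fixed finite interval $\varphi'$ must be ``quantitatively non-degenerate'' in the sense that some derivative $(\varphi')^{(\ell)}$ is bounded below in absolute value by a constant multiple of a power of $\sum|d_i|$ at some point; van der Corput then gives the decay. First I would reduce to the normalized situation: by homogeneity of the statement in $\sum|d_i|$, write $D = |d_1|+\dots+|d_k|$ and assume $D$ is large (for $D$ bounded the bound $\left|\int_\alpha^\beta e^{i\varphi}\,dt\right| \le \beta-\alpha \le CD^{-1/k}$ is trivial since the interval is fixed and finite), and normalize $c_i = d_i/D$ so that $\sum|c_i| = 1$.

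The key step is the following compactness/non-vanishing observation. The map sending a unit vector $(c_1,\dots,c_k)$ (with $\sum|c_i|=1$) to the function $g = \sum c_i\varphi_i \in S$ is linear and injective, so $\|g\|$ is bounded below uniformly over the unit sphere, for any fixed norm on $C(I)$ or on $C^{k-1}(I)$. In particular there is a constant $c_0 > 0$, depending only on $I$ and the basis, such that for every such $g$ there is a point $t_0 \in I$ and an index $\ell \in \{0,1,\dots,k-1\}$ with $|g^{(\ell)}(t_0)| \ge c_0$. Since $g = \varphi'/D$ and $\varphi$ is real-valued, we get $|\varphi^{(\ell+1)}(t_0)| \ge c_0 D$ for some $\ell+1 \in \{1,\dots,k\}$. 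Now I would invoke van der Corput's lemma: on a subinterval $I'$ of $I$ around $t_0$ on which $|\varphi^{(\ell+1)}| \gtrsim c_0 D$ (using that higher derivatives of the $\varphi_i$ are uniformly bounded on $I$, so $\varphi^{(\ell+1)}$ does not oscillate too wildly — this is where one controls the length of $I'$ from below, though one must be a little careful), one gets $\left|\int_{I'} e^{i\varphi}\,dt\right| \le C(c_0 D)^{-1/(\ell+1)} \le C' D^{-1/k}$. The contribution of $I \setminus I'$ is handled by repeating the argument or by a standard partition; alternatively one applies the global form of van der Corput valid when $\varphi^{(\ell+1)}$ is monotone, after further subdividing $I$ into boundedly many pieces on which the relevant derivative is monotone (the number of such pieces is bounded because $\varphi^{(\ell+1)}$, being a bounded-degree-times-exponential combination, has a bounded number of critical points on $I$).

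The main obstacle, I expect, is making the compactness argument fully quantitative and uniform. One needs the lower bound $|g^{(\ell)}(t_0)| \ge c_0$ to hold with $c_0$ independent of which $k$-tuple of coefficients is chosen and — as the proposition demands — independent of the interval $[\alpha,\beta]$ as long as it lies in a fixed finite interval $J \supset I$. This forces one to prove the bound on the *fixed* interval $J$ rather than on the varying $[\alpha,\beta]$, and then restrict: injectivity of $(c_i) \mapsto \sum c_i\varphi_i$ on $J$ gives a uniform lower bound $\|\sum c_i\varphi_i\|_{C^{k-1}(J)} \ge c_0$ over the unit sphere by compactness, and one then locates a point $t_0 \in J$ realizing a derivative of size $\ge c_0/(\text{const})$; but one must ensure $t_0$ can be taken in $[\alpha,\beta]$, or else argue that the decay estimate on $[\alpha,\beta]$ follows from controlling the full integral over $J$ minus the integrals over the complementary intervals. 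The cleanest route is probably: prove the estimate with $[\alpha,\beta]$ replaced by an arbitrary subinterval of $J$ by the same van der Corput argument applied on $J$ and then using that the antiderivative-type bounds localize, i.e., use the version of van der Corput that bounds $\left|\int_\alpha^\beta e^{i\varphi}\right|$ in terms of $\inf_J |\varphi^{(m)}|$ uniformly in the endpoints. The remaining routine points — that the $\varphi_i$ and their derivatives up to any fixed order are uniformly bounded on $J$, that $\varphi^{(m)}$ has boundedly many sign changes on $J$, and the bookkeeping of constants through the van der Corput application — I would not grind through here.
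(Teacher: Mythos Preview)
Your overall strategy---compactness on the unit sphere of coefficients, then van der Corput on a bounded decomposition of $I$---is the same as the paper's. But there is a genuine gap in how you state and use the compactness step, and the complications you flag in your last paragraph stem from it.

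You extract from compactness only the weak conclusion: for every unit coefficient vector there exists \emph{some} point $t_0\in I$ and \emph{some} index $\ell$ with $|g^{(\ell)}(t_0)|\ge c_0$. This single good point does not, by itself, let you apply van der Corput on all of $[\alpha,\beta]$: van der Corput needs $\inf$ of a fixed derivative over the interval, and you have neither a fixed $\ell$ nor a lower bound away from $t_0$. Your proposed fixes (repeat the argument, partition into pieces where $\varphi^{(\ell+1)}$ is monotone, invoke a ``global'' van der Corput) all still require a lower bound on each piece, which you have not produced.

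The paper closes this gap with one extra observation: for any nonzero $\psi\in S$, the quantity $\sum_{\ell=0}^{k-1}|\psi^{(\ell)}(t)|$ is strictly positive at \emph{every} $t\in I$, because if it vanished at some $t_0$ then $\psi$ would be the solution of a $k$th-order linear ODE with all initial data zero, hence $\psi\equiv 0$. Thus the continuous function $F(t,c)=\sum_{\ell}|\psi^{(\ell)}(t)|$ is positive on the compact set $I\times U$, and its minimum $C_0>0$ gives the pointwise bound
\[
\min_{t\in I}\sum_{\ell=1}^{k}|\varphi^{(\ell)}(t)|\ge C_0\,(|d_1|+\dots+|d_k|).
\]
With this in hand, the decomposition is clean: by Lemma~3 (solutions of the ODE have at most $K$ zeros on $I$), one partitions $I$ into a number of subintervals bounded independently of $\varphi$, on each of which a single index $\ell_m$ satisfies $|\varphi^{(\ell_m)}(t)|\ge|\varphi^{(j)}(t)|$ for all $j$ (so $|\varphi^{(\ell_m)}(t)|\ge C\,N_1(\varphi')$ there) and $\varphi'$ is monotone. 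Van der Corput applies on each piece and the sum over pieces gives the result.

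Two minor remarks: the induction on $k$ you announce at the outset is never used and should be dropped; and your worry about whether $t_0$ lies in $[\alpha,\beta]$ versus the ambient fixed interval disappears once you have the pointwise lower bound, since it holds on all of $[\alpha,\beta]$ automatically.
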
 
To prove  Proposition 4 we use the following two lemmas.  Both of them are 
well-known.  
\begin{lemma}  
Let $\varphi$ be a solution of \eqref{c2.1}.  
 Suppose that $\varphi$ is not identically $0$.  
Then, there exists a positive integer $K$ independent of $\varphi$ 
such that $\varphi$ has at most 
$K$ zeros in $I$.     
\end{lemma}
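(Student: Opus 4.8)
The plan is to argue by contradiction, exploiting that the solutions of \eqref{c2.1} form a $k$-dimensional space $S$ of exponential polynomials, and using a compactness argument to extract the uniformity of $K$. First note that a nontrivial $\varphi\in S$ vanishes to order at most $k-1$ at any point $t_0\in I$: if $\varphi(t_0)=\varphi'(t_0)=\dots=\varphi^{(k-1)}(t_0)=0$, then $\varphi^{(k)}(t_0)=0$ by \eqref{c2.1} and, inductively, all derivatives of $\varphi$ vanish at $t_0$, whence $\varphi\equiv0$. In particular the linear map $L\colon S\to\Bbb C^k$, $L\varphi=(\varphi(\alpha),\dots,\varphi^{(k-1)}(\alpha))$, is an isomorphism, so it suffices to bound the number of zeros in $I$ of the $\varphi\in S$ with $|L\varphi|=1$.

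Suppose no such bound exists; pick $\varphi_n\in S$, $|L\varphi_n|=1$, with at least $n$ distinct zeros in $I$. After passing to a subsequence, $L\varphi_n\to v$ with $|v|=1$. Since an element of $S$ is an explicit linear combination of functions $t^j e^{\lambda t}$ determined by its initial data, $\varphi_n\to\varphi:=L^{-1}v$ uniformly on $I$ together with all derivatives, and $\varphi\not\equiv0$. Being an exponential polynomial, $\varphi$ has real and imaginary parts that are real-analytic and not both identically zero, hence $\varphi$ has only finitely many zeros $z_1<\dots<z_p$ in $I$, of orders $m_1,\dots,m_p$ (each $\le k-1$ by the previous paragraph).

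Choose $\epsilon>0$ so small that the intervals $J_i=(z_i-\epsilon,z_i+\epsilon)$ are pairwise disjoint; on the compact set $I\setminus\bigcup_i J_i$ we have $|\varphi|\ge\delta>0$, so $\varphi_n$ has no zero there once $n$ is large. On $J_i$, at least one of $\mathrm{Re}\,\varphi$, $\mathrm{Im}\,\varphi$ has nonvanishing $m_i$-th derivative at $z_i$; denoting it by $g$ and the corresponding real or imaginary part of $\varphi_n$ by $g_n$, after shrinking $J_i$ if necessary and taking $n$ large, $g_n^{(m_i)}$ has no zero in $J_i$. Applying Rolle's theorem $m_i$ times then shows that $g_n$, and hence $\varphi_n$, has at most $m_i$ zeros in $J_i$. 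Therefore $\varphi_n$ has at most $\sum_{i=1}^p m_i$ zeros in $I$ for $n$ large, contradicting that it has at least $n$. This produces the required $K$.

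The crux is the localization in the last paragraph: a priori the zeros of $\varphi_n$ could accumulate somewhere invisible to the limit $\varphi$, and it is precisely the nondegeneracy $g^{(m_i)}(z_i)\ne0$ together with the uniform convergence of derivatives up to order $k-1$ that rules this out; everything else is standard. Alternatively, one can avoid compactness and obtain an explicit $K$ — of the form $k-1$ plus a constant times the length of $I$ times the spread of the imaginary parts of the roots of the characteristic polynomial of \eqref{c2.1} — by repeatedly applying the operators $d/dt-\lambda$ to annihilate the exponents one at a time and invoking Rolle at each step, but the bookkeeping is heavier.
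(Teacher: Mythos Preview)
The paper does not prove this lemma; it simply declares that Lemmas~3 and~4 are ``well-known'' and then uses them in the proof of Proposition~4. So there is nothing in the paper to compare against, and the only question is whether your argument is correct.

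It is. The uniqueness observation in your first paragraph shows that a nontrivial $\varphi\in S$ vanishes to order at most $k-1$ at any point; this both yields the isomorphism $L\colon S\to\Bbb C^k$ and guarantees that the orders $m_i$ of the zeros of the limit function satisfy $m_i\le k-1$, so that the $m_i$-th derivatives needed in the Rolle step are indeed controlled by the $C^{k-1}$ convergence $\varphi_n\to\varphi$ coming from continuous dependence on initial data. The localization is carried out correctly: any zero of $\varphi_n$ is in particular a zero of $g_n=\mathrm{Re}\,\varphi_n$ (or $\mathrm{Im}\,\varphi_n$), and $m_i$ applications of Rolle on the real-valued $g_n$ bound the number of such zeros in $J_i$ by $m_i$ once $g_n^{(m_i)}$ is nonvanishing there.

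The alternative you sketch at the end---successively applying $d/dt-\lambda$ and invoking Rolle---is the classical route to an explicit bound for exponential polynomials (in the spirit of P\'olya--Szeg\H{o}). Either approach is acceptable here; your compactness argument is cleaner to write and entirely sufficient, since the paper never needs an explicit value of $K$.
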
  
\begin{lemma}[van der Corput]  
Let $f: [c,d]\to \Bbb R$ and $f\in C^j([c,d])$ 
for some positive integer $j$, where $[c,d]$ is an arbitrary compact 
interval in $\Bbb R$. 
Suppose that $\inf_{u\in [c,d]}|(d/du)^jf(u)|\geq \lambda >0$. 
When $j=1$, we further assume that $f^\prime$ is monotone on $[c,d]$. 
Then 
$$\left|\int_c^d e^{if(u)} \, du\right|\leq C_j\lambda^{-1/j},$$  
where $C_j$ is a positive constant depending only on $j$. 
 $($See \cite{SW2, Z}$)$.  
\end{lemma}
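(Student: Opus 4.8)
The plan is to prove this by induction on $j$, running van der Corput's classical argument; the constant $C_j$ will be generated by the recursion, hence will depend only on $j$.

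For the base case $j=1$ I would integrate by parts. Since $f\in C^1([c,d])$ and $f'$ is monotone with $|f'|\ge\lambda$, the function $1/f'$ is of bounded variation and, being monotone of constant sign, has total variation $|1/f'(d)-1/f'(c)|\le 1/\lambda$. Writing $\int_c^d e^{if(u)}\,du=\int_c^d (if'(u))^{-1}\,d\big(e^{if(u)}\big)$ and integrating by parts in the Riemann--Stieltjes sense leaves a boundary term of modulus at most $1/|f'(c)|+1/|f'(d)|\le 2/\lambda$ together with a Stieltjes integral of modulus at most $\mathrm{Var}(1/f';[c,d])\le 1/\lambda$. Hence $\big|\int_c^d e^{if}\,du\big|\le 3/\lambda$, so $C_1=3$ works.

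For the inductive step, assume the bound at order $j-1$ with constant $C_{j-1}$ and suppose $|f^{(j)}|\ge\lambda$ on $[c,d]$. Since $f^{(j)}$ is continuous and nonvanishing on the interval it keeps a fixed sign, so $f^{(j-1)}$ is strictly monotone; let $c_0\in[c,d]$ be a point where $|f^{(j-1)}|$ is smallest (the unique zero of $f^{(j-1)}$ if one exists, otherwise an endpoint). For $\delta>0$ put $I_\delta=[c,d]\cap[c_0-\delta,c_0+\delta]$. On each of the at most two components $J$ of $[c,d]\setminus I_\delta$, the mean value theorem together with the constant sign of $f^{(j)}$ and $|f^{(j)}|\ge\lambda$ forces $|f^{(j-1)}|\ge\lambda\delta$, and $f^{(j-1)}$ is still monotone on $J$, so the inductive hypothesis at order $j-1$ applies with lower bound $\lambda\delta$ (when $j-1=1$ the monotonicity of $f'$ needed there is exactly the constant sign of $f''$), giving $\big|\int_J e^{if}\,du\big|\le C_{j-1}(\lambda\delta)^{-1/(j-1)}$; meanwhile $\big|\int_{I_\delta}e^{if}\,du\big|\le 2\delta$ trivially. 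Thus $\big|\int_c^d e^{if}\,du\big|\le 2\delta+2C_{j-1}(\lambda\delta)^{-1/(j-1)}$, and choosing $\delta=\lambda^{-1/j}$, together with the identity $(1-1/j)/(j-1)=1/j$, gives $\big|\int_c^d e^{if}\,du\big|\le(2+2C_{j-1})\lambda^{-1/j}$; in the remaining case $d-c<2\lambda^{-1/j}$ the trivial estimate $\big|\int_c^d e^{if}\,du\big|\le d-c$ already suffices. This proves the bound at order $j$ with $C_j=2+2C_{j-1}$, completing the induction.

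The one substantive point is the optimization in $\delta$, which balances the trivial contribution $2\delta$ from the excised neighborhood of $c_0$ against the inductive contribution $2C_{j-1}(\lambda\delta)^{-1/(j-1)}$ from the complementary intervals. Everything else is bookkeeping: checking that the monotonicity hypotheses are inherited by subintervals, that the Riemann--Stieltjes integration by parts in the base case is legitimate under the bare $C^1$ assumption, and that the degenerate configurations (a short interval, $c_0$ at an endpoint, or $f^{(j-1)}$ with no zero in $[c,d]$) are harmless. I do not anticipate any genuine obstacle, since this is the classical van der Corput lemma.
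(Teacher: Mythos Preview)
Your proof is correct; it is the classical inductive argument for van der Corput's lemma, and all the case distinctions (no zero of $f^{(j-1)}$, $c_0$ at an endpoint, short interval) are handled properly. Note, however, that the paper does not actually prove this lemma: it simply states it as well-known and cites \cite{SW2, Z}, where exactly the argument you wrote down can be found. So there is nothing to compare against---you have supplied the standard proof that the paper defers to the literature.
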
  
\par 
We now give a proof of Proposition 4.  
We consider linear combinations 
$c_1\varphi_1+c_2\varphi_2+\dots + c_k\varphi_k$, where 
$c_1, c_2, \dots, c_k\in \Bbb C$.  We   
write  $\psi=c_1\varphi_1+c_2\varphi_2+\dots + c_k\varphi_k$ and define 
\begin{gather*}
N_1(\psi)=|c_1|+|c_2|+\dots+|c_k|, 
\\ 
 N_2(\psi) 
=\min_{t\in I}\left(|\psi(t)|+|\psi'(t)|+\dots+|\psi^{(k-1)}(t)|\right).  
\end{gather*} 
Let $U=\{(c_1,c_2,\dots ,c_k)\in \Bbb C^k: |c_1|+|c_2|+\dots+|c_k|=1\}$.  
We consider a function $F$ on $I\times U$ defined by 
$$F(t, c_1,c_2, \dots,c_k)=|\psi(t)|+|\psi'(t)|+\dots+|\psi^{(k-1)}(t)|.$$ 
Then, the function $F$ is continuous and positive on $I\times U$ 
(see \cite{C}).  Thus,  if we put 
$$C_0=\min_{(t, c_1,c_2, \dots,c_k)\in I\times U}F(t, c_1,c_2, \dots,c_k), $$ 
then we see that $C_0>0$ and $N_2(\psi)\geq C_0N_1(\psi)$.  
\par  
Therefore, if $\varphi$ is as in  Proposition 4, we have 
\begin{equation} 
\min_{t\in I}\left(|\varphi'(t)|+|\varphi''(t)|+\dots+|\varphi^{(k)}(t)|\right) \geq C_0N_1(\varphi').                 \label{c2.2} 
\end{equation}    
 By \eqref{c2.2}, for any $t\in I$, there exists         
 $\ell\in \{1,2,\dots , k\}$ such that 
$$|(d/dt)^{\ell}\varphi(t)|\geq CN_1(\varphi'), \quad C>0.$$    
Applying Lemma 3 suitably,  we can decompose $I
=\cup_{m=1}^HI_m$, where $H$ is a positive integer independent 
of $\varphi$ and $\{I_m\}$ is a 
family of non-overlapping subintervals of $I$ such that  
for any interval $I_m$ there is $\ell_m \in \{1,2,\dots , k\}$ 
satisfying 
$|(d/dt)^{\ell_m}\varphi(t)|\geq |(d/dt)^j\varphi(t)|$ on $I_m$ for all 
$j\in \{1,2,\dots , k\}$, so $|(d/dt)^{\ell_m}\varphi(t)|\geq 
CN_1(\varphi')$ on $I_m$,   and such that  
 $\varphi'$ is monotone on each $I_m$.  
Therefore, by Lemma 4 we have 
\begin{align*}  
\left|\int_\alpha^{\beta}e^{i\varphi(t)} \,dt\right|
&=\left|\sum_{m=1}^H\int_{I_m}e^{i\varphi(t)} \, dt\right|
\leq C\sum_{m=1}^H\min\left(|I_m|, N_1(\varphi')^{-1/\ell_m}\right)
\\ 
&\leq CN_1(\varphi')^{-1/k} . 
\end{align*}  
 Since $N_1(\varphi')=|d_1|+|d_2|+\dots + |d_k|$, this completes 
 the proof of  Proposition 4.   
\par 
Proof of Theorem 5.  
By the change of variables $t=e^s$ and an integration by parts argument, 
we can see that to prove  Theorem 5 
it suffices to show 
\begin{equation}
\left|\int_\alpha^\beta \exp\left(i\langle e^{tA}\eta,\zeta\rangle\right)\,dt
\right|\leq CJ(A,\eta,\zeta)^{-1/N}    \label{c2.3} 
\end{equation} 
for an appropriate constant $C>0$, where $[\alpha,\beta]$ is an arbitrary 
compact interval in $\Bbb R$.  
Let $\psi(t)=\langle e^{tA}\eta,\zeta\rangle$.  Then, 
$\psi'(t)=\langle e^{tA}\eta,A^*\zeta\rangle$, and hence, 
 by \eqref{c1.2} we have 
$$\psi'(t)=\sum_{i=1}^{k}
\sum_{j=0}^{m_i-1} c_{ij}(\eta,\zeta)t^je^{\gamma_it}, $$ 
where  
$$c_{ij}(\eta,\zeta)=\frac{1}{j!}\langle (A-\gamma_i E)^jP_i(A)\eta, 
A^*\zeta\rangle. $$  
It is known  that
  $N$ functions $t^je^{\gamma_it}$ ($0\leq j\leq m_i-1$, $1\leq i\leq k$)  
form a basis for the space of solutions for the ordinary differential 
equation of order $N$ with characteristic polynomial $\phi_A$ 
(see \cite{C}).  
Thus, the estimate 
\eqref{c2.3} immediately follows from  Proposition 4, since 
$\sum_{i=1}^k\sum_{j=0}^{m_i-1}\left|c_{ij}(\eta,\zeta)\right|
\approx  J(A,\eta,\zeta)$.

\end{document}